\newcommand{\C}{{\mathbb C}}
\numberwithin{equation}{section} 
\theoremstyle{plain} 
\newtheorem{theorem}[equation]{Theorem} 
\newtheorem{lemma}[equation]{Lemma} 
\newtheorem{proposition}[equation]{Proposition} 
\newtheorem{corollary}[equation]{Corollary} 
\newtheorem*{definition*}{Definition} 
\newtheorem*{theoremA}{Theorem A}
\theoremstyle{definition}
\newtheorem{definition}[equation]{Definition}
\theoremstyle{definition}
\theoremstyle{remark}
 \numberwithin{equation}{section}
\begin{document}

\title[Spectral hypersurfaces and Hadamard matrices]{\bf Spectral hypersurfaces for operator pairs and Hadamard matrices of F type}

\author{T. Peebles}
\address{Department of Mathematics and Statistics \\ University at Albany\\Albany, NY 1222}
\email{tpeebles@albany.edu}
\author{M. Stessin}
\address{Department of Mathematics and Statistics \\ University at Albany \\ Albany, NY 12222}
\email{mstessin@albany.edu}
\begin{abstract}
We prove that if for a pair of $n\times n$ matrices $A$ and $B$ the projective joint spectrum  
of $A,B$, $AB$ and the identity is given by $$\sigma(A,B,AB,I)=\{[x,y,z,t]\in \C{\mathbb P}^3:x^n+y^n+(-1)^{n-1}z^n-t^n=0 \},$$
then this pair is unitary equivalent to a one associated with a complex Hadamard matrix of order $n$. If $n=3,4$, or 5, where there is a complete description of Hadamard matrices, we list those that generate a pair with the above mentioned spectrum. If 
\begin{align*}&\sigma(A,B,AB,BA,I) =\{[x,y,z_1,z_2,t]\in \C\mathbb{ P}^4:  \\ &x^n+y^n+(-1)^{n-1}(e^{2\pi i/n}z_1+z_2)^n-t^n=0\},\end{align*}
this Hadamard matrix is exactly the Fourier matrix $F_n$. If for an operator pair $A,B$ acting on a Hilbert space, such hypersurfaces appear as components of the projective joint spectrum of the corresponding tuples, then under some mild conditions  the pair has a common invariant subspace of dimension $n$, and the restriction of $A,B$ to this subspace is generated by a Hadamard matrix of F type. 
\end{abstract}
\keywords{Projective joint spectrum, complex Hadamard matrix, Fourier type Hadamard matrix}
\subjclass[2010]
{Primary:  
47A25, 47A13, 47A75, 47A15, 14J70.	}
\maketitle


\vspace{.2cm}

\section{Introduction and Statement of main results}

A matrix $h\in M_n(\C)$ is called a \textit{complex Hadamard matrix}, if  each entry of $h$ is a complex number of modulus one, and rows of the matrix are mutually orthogonal. Of course, this is equivalent to $\frac{1}{\sqrt{n}}h$ being unitary and all entries having the same absolute value. According to \cite{BN} these matrices were originally introduced by Sylvester \cite{S} as real matrices with entries $\pm1$ and orthogonal rows. Hadamard matrices with entries being roots of unity, denoted by $H(q,N)$ type, where $q$ is the order of the root of unity, and $N$ is the size of the matrix, were introduced by Butson \cite{Bu}. Matrices with arbitrary complex unimodular entries appeared in \cite{Po}. 

Interest to Hadamard matrices is caused by the fact that they appear in a number of mathematical objects. Among them 
are:  maximal abelian $*$-sublagebras of the algebra of complex $n\times n$ matrices \cite{Po},  statistical mechanical model, knot invariants, and planar algebras, \cite{J}, to name a few.  They are also associated with quantum permutation groups (cf  \cite{B1}, \cite{B}, \cite{BN}). A list of known families of Hadamard matrices can be found in \cite{TZ}. 

As we will see below, complex Hadamard matrices naturally appear in multivariable spectral theory in connection with projective joint spectra of operator pairs. This connection allows us to establish a spectral characterization of  Fourier type Hadamard matrices associated with a certain subgroup of permutation matrices. It also turned out that under some mild conditions, if the projective joint spectrum of a tuple generated by an operator pair acting on a Hilbert space contains as a component the hypersurface associated with the Fourier matrix of order $n$, then  this pair has a common invariant subspace of the same dimension $n$. The investigation of spectral properties of F type Hadamard matrices is the goal of the present paper.

Two $n\times n$  Hadamard matrices are called similar, $h_1\approx h_2$ , if

\begin{equation}\label{hadamard} h_2=\Lambda_1 P_1 h_1  P_2 \Lambda_2, \end{equation}
where  $\Lambda_1$ and $\Lambda_2$ are diagonal matrices with entries on the main diagonal having absolute value one, and $P_1$ and $P_2$ are permutation matrices of order $n$. Obviously, if $h_1$ is a Hadamard matrix, and $h_2$ is given by \eqref{hadamard}, then $h_2$ is Hadamard as well.

To the best of our knowledge a complete description of complex Hadamard matrices is known only for orders 2,3,4, and 5. The only family of matrices that appears among Hadamard matrices  in all dimensions is the family of matrices similar to $H(N,N)=F_N$ type. They are called Fourier, or F-type, matrices because of the relation of $F_N$ to the Fourier transform on ${\mathbb Z}_n$.  The matrix $F_N$ is  given by 
\begin{equation} \label{Fourier}
F_N=[f_{jk}]_{j,k=1}^N, \ f_{jk}=e^{2\pi (k-1)(j-1)i/N}.
\end{equation}
\vspace{.2cm}
There are also other known families in some higher dimensions, see \cite{Pe}, \cite{Ta}, and also \cite{TZ}. Self-adjoint $6\times 6$ complex Hadamard matrices were described in \cite{BeN}.
 
For $n=2,3,5$ every complex Hadamard matrix is similar to the Fourier matrix of the corresponding order. For $n=4$ every complex Hadamard matrix is similar to
\begin{equation}\label{n=4}
\left [ \begin{array}{cccc} 1 & 1 & 1 & 1 \\ 1 &t & -1 &-t \\ 1 &-1 & 1 & -1 \\ 1 &-t & -1 & t \end{array} \right ], 
\end{equation}
where $t$ is an arbitrary complex number of absolute value one. When $t=\ i$, the corresponding matrix is $F_4$, when $t=-i$, the matrix is similar to $F_4$. These results for $n=2,3,4$ are simple, and for $n=5$ it was proved by Haagerup in \cite{H}.

As stated earlier, we investigate an interplay between multivariable spectral theory and Hadamard matrices. While the classical spectral theory of normal operators acting on Hilbert spaces is a powerful tool for investigation in many areas of analysis, it took some time to find a good and sufficiently general definition of the spectrum of an operator tuple. 
It started in the 1960s and in 1970 Taylor \cite{T} introduced a definition for commuting tuples, (see also \cite{EP}), and several other definitions followed (cf \cite{MP, P1,P2}). For an arbitrary, not necessarily commuting, tuple the notion of the \textit{projective joint spectrum} was introduce in \cite{Y}. This definition is a natural generalization of the classical definition of the spectrum. It is as follows:
\begin{definition} 
Let $A_1,...,A_n$ be operators acting on a Hilbert space $H$. The \textit{projective joint spectrum} of the tuple $A_1,...,A_n$, \ $\sigma(A_1,...,A_n)$, is defined as
$$\sigma(A_1,...,A_n)=\{ [x_1,...,x_n]\in \C{\mathbb P}^{n-1}: x_1A_1+...+x_nA_n \ \mbox{is not invertible} \}.  $$	
\end{definition}
\vspace{.2cm}
If the dimension of $H$ is finite, the above definition turns into
$$\sigma(A_1,...,A_n)=\{[x_1,...,x_n]\in \C{\mathbb P}^{n-1}: det (x_1A_1+...+x_nA_n)=0 \},$$
and the projective joint spectrum turns into the determinantal variety of a matrix pencil, which has been under scrutiny for more than a hundred years. Notably, the study of group determinants led Frobenius to laying out the foundation of representation theory. There is an extensive literature on the question when a variety in the projective space admits a determinantal representation. Without trying to give an exhaustive account of the references on this topic, we just mention \cite{D1}-\cite{D5}, \cite{KV}, \cite{V}, and also the monograph \cite{D} and references there. 

To avoid trivial redundancies, it is common to assume that at least one of the operators in the tuple is invertible, and, therefore, since $\sigma(A_1,..,A_n)=\sigma(A_n^{-1}A_1,...,A_n^{-1}A_{n-1}, I)$, \ $A_n$ can be taken to be the identity. In what follows, we will always assume that this is the case and write $\sigma(A_1,...,A_{n-1})$ instead of $\sigma(A_1,...,A_{n-1},I)$. Also, it was found to be useful to deal with the part of the  projective joint spectrum that lies in the chart $\{x_n\neq 0\}$. This part is called the \textit{proper projective joint spectrum} and is denoted by $\sigma_p(A_1,...,A_{n-1})$,
\begin{eqnarray*}
&\sigma_p(A_1,...,A_n) \\ = &\{ (x_1,...,x_n)\in \C^n:  x_1A_1+...+x_nA_n-I \ \mbox{is not invertible} \} .
\end{eqnarray*}


In the last decade projective joint spectra of operator tuples have been intensively investigated 
(see \cite{BCY}-\cite{CST},\cite{DY},\cite{GoY}-\cite{GY}, \cite{MQW}, \cite{ST}-\cite{SYZ},\cite{Y}) from the following angle: what does the geometry of the projective joint spectrum tell us about the relations between operators in the tuple? 
As examples of recent results obtained in this direction we mention the following: \\
1) for a tuple of self-adjoint compact operators acting on a separable Hilbert space, the operators of the tuple mutually commute if and only if their projective joint spectrum, $\sigma(A_1,...,A_n)$, is a countable and locally finite union of projective hyperplanes, \cite{CSZ}, \cite{MQW}; \\
2) if the proper projective joint spectrum of a pair of self-adjoint matrices consists of certain type of "complex ellipses", then this pair is  decomposable in a direct sum of pairs of 2$\times$2 matrices and each of these pairs of 2$\times$2 matrices generates an irreducible representation of a dihedral group whose braid relation is determined by the corresponding ellipse, \cite{CST}. We just mention in passing that this result inspired the research in \cite{CST}  that characterized linear representations of finite non-special Coxeter groups in terms of joint spectra of images of Coxeter generators.

Coming back to Hadamard matrices, a straightforward check shows the following:

Let $A$ be the diagonal $n\times n$ matrix with $n$-th roots of unity appearing on the main diagonal in the increasing order of arguments, and $B=\frac{1}{n}F_n^*AF_n$ (of course, $A$ is the diagonalization of  $B$). Then
\begin{align}
&\sigma_p(A,B,AB)=\{ x^n+y^n+(-1)^{n-1}z^n=1\}\label{Fourier1} \\
&\sigma_p(A,B,AB,BA) =\{ x^n+y^n+(-1)^{n-1}(e^{2\pi i/n}z_1+z_2)^n=1\}\label{Fourier2}.	
\end{align}
Since this pair $A,B$ was generated by the Fourier matrix $F_n$, we call hypersurfaces in $\C^3$ and $\C^4$ given by the right hand sides of \eqref{Fourier1} and \eqref{Fourier2} \textit{the Fourier hypersurfaces}.

It easily follows from the results of \cite{CST}, section 5, that any pair of $2\times 2$ self-adjoint matrices $U,V$ with $\sigma_p(U,V,UV)$ 
being  given 
 by 
\eqref{Fourier1} 
with $n=2$, is unitary equivalent to the pair $A,B$ generated by $F_2$, which may be considered as a spectral characterization of the $2\times 2$ Fourier matrix. 

Now, it is natural to ask whether the case $n=2$ is an exception, or it is a general fact and something similar is valid  for every $n$. It turned out that for $n>2$ the situation is more complex. 
\vspace{.2cm}

\begin{theorem}\label{Theorem2}
Let $A$ and $B$ be two $n\times n$ complex matrices such that $A$ is normal and $\parallel B\parallel=1$ with respect to the Euclidean norm on $\C^n$. Suppose that 
$$\sigma_p(A,B,AB)=\{x^n+y^n+(-1)^{n-1}z^n=1\} $$
Then
\begin{itemize}
\item[1).] Matrices $A$ and $B$ are unitary, and the spectra of $A$ and $B$, $\sigma(A)$ and $\sigma(B)$, consist of $n$-th roots of unity, each of multiplicity $1$.
\item[2).] 	If $e_0,...,e_{n-1}$ and $\zeta_0,...,\zeta_{n-1}$ are eigenbases for $A$ and $B$ respectively, such that $Ae_j=e^{2\pi ji/n}e_j$, \ $B\zeta_j=e^{2\pi ji/n}\zeta_j, \ j=0,...,n-1$, then $\sqrt{n}$ times the transition matrix from the basis $\{e_0,...,e_{n-1}\}$ to $\{\zeta_0,...,\zeta_{n-1}\}$ is a complex Hadamard matrix.

If $n=3,4,5$ then the pair $(A,B)$ is unitary equivalent to the Hadamard pair of Fourier type.
\end{itemize}
	
\end{theorem}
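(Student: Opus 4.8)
The plan is to turn the spectral hypothesis into a single determinantal identity, read off from slices of it that $A$ and $B$ are unitary with full spectrum $\{d_j\}=\{e^{2\pi ij/n}\}$, and then exploit one carefully chosen $1$-parameter substitution to force the diagonal of every power of $B$ (in the eigenbasis of $A$) to vanish. Since the proper joint spectrum lies in the chart $t\neq 0$, the hypothesis says exactly that $\det(xA+yB+zAB-I)$ vanishes precisely on $\{x^n+y^n+(-1)^{n-1}z^n=1\}$. The entries of $xA+yB+zAB$ are linear forms, so this determinant has total degree $\le n$; as $x^n+y^n+(-1)^{n-1}z^n-1$ is irreducible of degree $n$, the determinant is a scalar multiple of it, and evaluating at the origin gives $\det(xA+yB+zAB-I)=(-1)^{n-1}\bigl(x^n+y^n+(-1)^{n-1}z^n-1\bigr)$. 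Putting $y=z=0$ yields $\det(xA-I)=(-1)^{n-1}(x^n-1)$, so $A$ is invertible with $n$ distinct eigenvalues, all $n$-th roots of unity; being normal, $A$ is unitary. Putting $x=z=0$ gives $\det(yB-I)=(-1)^{n-1}(y^n-1)$, so $B$ too has $n$ distinct eigenvalues, all $n$-th roots of unity, hence is diagonalizable. To see $B$ is unitary, note that if $Bv=\lambda v$ with $\|v\|=1$ and $|\lambda|=1=\|B\|$, then $\langle B^*v,v\rangle=\overline{\langle Bv,v\rangle}=\bar\lambda$ has modulus $1=\|B^*v\|\,\|v\|$, which is maximal; equality in Cauchy--Schwarz forces $B^*v=\bar\lambda v$, whence $B^*Bv=v$. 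Since the eigenvectors of $B$ span $\C^n$, $B^*B=I$. This proves part 1).

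For part 2), diagonalize $A$ so that $A=D=\operatorname{diag}(d_0,\dots,d_{n-1})$ with $d_j=e^{2\pi ij/n}$, and write $B=MDM^*$, where $M$ is the (unitary) transition matrix from $\{e_j\}$ to $\{\zeta_j\}$. From $e_ie_i^*=\tfrac1n\sum_b d_i^{-b}D^b$ and $B^n=I$ one gets, for the spectral projection $\Pi_j=\zeta_j\zeta_j^*$ of $B$ onto $d_j$, $|M_{ij}|^2=(\Pi_j)_{ii}=\tfrac1n\sum_{a=0}^{n-1}d_j^{-a}(B^a)_{ii}$ (all entries taken in the $D$-basis). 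Thus it suffices to prove $(B^a)_{ii}=0$ for $a=1,\dots,n-1$ and every $i$. The key step is to substitute $y=-zd_{i_0}$ into the determinantal identity: since $d_{i_0}^n=1$ one has $(-d_{i_0})^n=(-1)^n$ and $(-1)^n+(-1)^{n-1}=0$, so the $z^n$-terms on the right cancel and, using $\det(I-xD)=\prod_k(1-xd_k)=1-x^n$, the identity collapses to
\[
\det\bigl(I-xD-z(D-d_{i_0}I)B\bigr)=\det(I-xD)\qquad\text{for all }x,z,
\]
for every index $i_0$. Equivalently, $W(x)B$ is nilpotent for every $x$, where $W(x)$ is the diagonal matrix with entries $(d_k-d_{i_0})/(1-xd_k)$ (which vanishes at $k=i_0$). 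Writing the vanishing of all elementary symmetric functions of $W(x)B$ and using that $W(x)$ is diagonal gives $\sum_{|S|=b,\,i_0\notin S}\bigl(\prod_{k\in S}\tfrac{d_k-d_{i_0}}{1-xd_k}\bigr)\det B[S,S]=0$ for all $x$ and all $b$; resumming in $b$ against the resolvent $(I-xD)^{-1}$ and expanding in partial fractions in $x$ produces, for each $b$, linear relations among the principal $b\times b$ minors of $B$ whose coefficients are products of the factors $(d_k-d_{i_0})$. Letting $i_0$ range over all $n$ eigenvalue indices and taking successive differences cancels these factors; inductively on $b=1,\dots,n-1$ this leaves precisely $(B^b)_{ii}=0$. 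Hence $|M_{ij}|^2=1/n$ for all $i,j$, and since $M$ is unitary, $\sqrt n\,M$ has unimodular entries and orthogonal rows, i.e.\ is a complex Hadamard matrix.

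For $n=3,4,5$ we invoke the known classification: every complex Hadamard matrix of order $3$ or $5$ is similar to $F_n$, so $\sqrt n\,M$ is already of F type and $(A,B)$ is unitarily equivalent to the Fourier-type Hadamard pair it generates. For $n=4$ the classification only gives that $2M$ is similar to the one-parameter family \eqref{n=4}, and one must additionally use the part of $\sigma_p(A,B,AB)$ not consumed by the nilpotency relations above --- most directly the location of $\sigma(AB)$, which the identity forces to be $\{z^4=-1\}$, together with the finer dependence of the pencil on $z$ --- to eliminate the values $t\neq\pm i$, so that $2M$ is similar to $F_4$ and the pair is again of Fourier type. The step I expect to be the main obstacle is the resolvent/partial-fraction bookkeeping that upgrades ``$B$ has zero diagonal'' to ``$(B^a)_{ii}=0$ for all $a<n$'': organizing the sum over subsets $S$ so that the clean cancellation appears when $i_0$ is varied is the technical heart of the argument; the $n=4$ endgame (ruling out $t\neq\pm i$) will also require some care.
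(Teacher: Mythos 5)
Your part 1) is correct and is a genuinely different (and cleaner) route than the paper's: the paper derives unitarity and the spectra from the local resolvent analysis of Theorem \ref{Theorem1}, whereas you get it directly from the degree/irreducibility argument forcing $\det(xA+yB+zAB-I)=(-1)^{n-1}\bigl(x^n+y^n+(-1)^{n-1}z^n-1\bigr)$, plus the Cauchy--Schwarz equality trick for $B$. Your reduction of part 2) to the statement $(B^a)_{ii}=0$ for $a=1,\dots,n-1$ (in the eigenbasis of $A$) is also exactly the right target: it is the paper's Proposition \ref{eigenvector}, $P_0B^aP_0=0$, together with its rotated versions for $e^{2\pi i m/n}A$, followed by the same Fourier/Vandermonde inversion.

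The genuine gap is precisely at that target. The paper spends all of Section \ref{hypersurfaces} and most of Section \ref{thm1} (Propositions \ref{first-fact}--\ref{eigenvector}) establishing $P_0B^aP_0=0$: it first extracts the weighted-walk identities $P_0(BT)^{a-1}BP_0=0$ and the mixed identities of Corollary \ref{D} from the residues of the resolvent, and only then converts these into $P_0B^aP_0=0$ via the substitution $B=TAB+P_0B-TB$ and a telescoping argument; the identities coming from the $z$-variable (the operator $AB$) are used essentially. Your proposed substitute --- nilpotency of $(I-xD)^{-1}(D-d_{i_0}I)B$ for all $x$ and all $i_0$, followed by partial fractions and differencing over $i_0$ --- is a promising idea (the $n$ slices $y=-d_{i_0}z$ do carry essentially all of the determinantal identity, since $\prod_{i_0}(y+d_{i_0}z)=y^n+(-1)^{n-1}z^n$), and it does close for $a=1$ and $a=2$. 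But for general $a$ the step from ``linear relations among principal $a\times a$ minors of $B$ weighted by products of $(d_k-d_{i_0})$'' to ``$(B^a)_{ii}=0$'' is not a routine manipulation: $(B^a)_{ii}$ counts closed walks that revisit vertices, and rewriting it in terms of principal minors requires the full strength of the lower-order vanishing together with a nontrivial inclusion--exclusion over cycle types. As written, the inductive step is asserted, not proved, and you flag it yourself; this is the technical heart of the theorem and cannot be omitted. A second, smaller gap is the $n=4$ case of the final claim: Haagerup's classification only puts $2M$ in the one-parameter family \eqref{n=4}, and eliminating $t\neq\pm i$ (which the paper does by a computer-algebra check of the $z^2$-coefficient of the pencil, Section \ref{last}) is again only sketched. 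For $n=3,5$ your appeal to the classification is fine, with the caveat that ``Hadamard-similar to $F_n$'' involves permutation factors $P_1,P_2$ that change the pair, so the precise form of the unitary equivalence still requires an argument like the paper's Lemma \ref{lemma}.
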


As it was mentioned above, there is a complete description of complex Hadamard matrices of orders 3,4, and 5. This allows us to prove the following result. 


Let us  denote by $\widehat{B}_n$ and $\widehat{\widehat{B}}_n$ the following $n$-dimensional matrices:
\begin{align}
\widehat{B}_n=\frac{1}{n}F_n^*AF_n=\left [ \begin{array}{ccccc} 0 & 0 &... & 0 & 1\\ 1 & 0 & ... & 0 & 0 \\ .& .& . & . & . \\ 0 & 0 & ... & 1 & 0 \end{array} \right ] \label{B_hats}, \\
\widehat{\widehat{B}}_n= \frac{1}{n}F_nAF_n^*=\left [ \begin{array}{ccccc} 0 & 1 & 0 &... & 0 \\ .& .& .& .& . \\ 0 & 0 & 0& ... & 1 \\ 1 & 0 & 0 &... & 0     \end{array} \right ] \nonumber. \label{B_hats}
\end{align}

Of course,
\begin{equation}\label{hats}
\widehat{\widehat{B}}=\widehat{P}^*\widehat{B}\widehat{P}, 
\end{equation}
where $\widehat{P}$ is the matrix of the permutation 
$$ \left ( \begin{array}{ccccc} 0 & 1 &...& n-2 & n-1 \\
n-2 & n-3 &... & 0 & n-1 \end{array} \right ) $$

\begin{theorem}\label{Theorem3} Let $A$ and $B$ be two matrices satisfying the conditions of Theorem \ref{Theorem2}. 
\begin{itemize}
\item[a)] If $n=3$, then the pair $(A,B)$ is unitary equivalent 
\begin{align}\mbox{either to } \ \left ( \left [ \begin{array}{ccc} 1 & 0 & 0 \\ 0 & e^{2\pi i/3} & 0 \\ 0 & 0 & e^{4\pi i /3} \end{array} \right ] ,  \ \left [ \begin{array}{ccc} 0 & 1 & 0 \\ 0 & 0 & 1\\ 1 & 0 & 0 \end{array} \right ] \right ), \label{3a.1}\\ \mbox{or to} \ \left ( \left [ \begin{array}{ccc} 1 & 0 & 0 \\ 0 & e^{2\pi i/3} & 0 \\ 0 & 0 & e^{4\pi i /3} \end{array} \right ] ,  \ \left [ \begin{array}{ccc} 0 & 0 & 1 \\ 1 & 0 & 0\\ 0 & 1 & 0 \end{array} \right ] \right ) \label{3a.2}\end{align}. 
\item[b)] If $n=4$, or $n=5$,  there exists a permutation $P\in{\mathcal P}_n$, the group of permutations of order $n$, such that the pair $(A,B)$ is unitary equivalent to either $(P^*AP,\widehat{B}_n)$, or $(\widehat{B}_n,P^*AP)$.

\end{itemize}
\end{theorem}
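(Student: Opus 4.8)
The plan is to use Theorem \ref{Theorem2} to put the pair into a ``permuted diagonal plus Hadamard-conjugated diagonal'' form, reduce the Hadamard matrix to $F_n$, and then exploit the very rigid shape of the Fourier polynomial — which forces all proper principal minors of $B$ to vanish — to recognize $B$ as a weighted cyclic shift. Concretely: by Theorem \ref{Theorem2}, $A$ and $B$ are unitary with simple spectra equal to $\{1,\omega,\dots,\omega^{n-1}\}$, $\omega=e^{2\pi i/n}$; choosing orthonormal eigenbases ordered by eigenvalue and passing to the $A$-eigenbasis we may assume $A=D:=\operatorname{diag}(1,\omega,\dots,\omega^{n-1})$ and $B=\tfrac1n hDh^{*}$, where $h$ (that is, $\sqrt n$ times the transition matrix to the $B$-eigenbasis) is a complex Hadamard matrix. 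The only residual freedom fixing $A=D$ is conjugation of the pair by a diagonal unitary, which sends $h\mapsto\Lambda_1 h$, while $h\mapsto h\Lambda_2$ changes nothing since $\Lambda_2$ commutes with $D$. By the $n\le 5$ clause of Theorem \ref{Theorem2} (equivalently, by the classification recalled in \S1: every $3$- or $5$-Hadamard is similar to $F_n$, and for $n=4$ the pencil determinant forces the parameter in \eqref{n=4} to be $\pm i$), $h$ is similar to $F_n$, say $h=\Lambda_1 P_1 F_n P_2\Lambda_2$. Hence $B=\Lambda_1 P_1 C_{\widetilde D}P_1^{*}\Lambda_1^{*}$, where $C_{\widetilde D}=\tfrac1n F_n\widetilde D F_n^{*}$ is the circulant with eigenvalue-diagonal $\widetilde D:=P_2 D P_2^{*}$, a permutation of the diagonal of $D$.

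The key step is to show the hypothesis forces every proper principal minor of $B$ to vanish. Since $A=D$, write $xA+yB+zAB-tI=(xD-tI)+(yI+zD)B$ and, where $yI+zD$ is invertible, factor the determinant as $\prod_j(y+z\omega^j)\cdot\det\bigl(B+\operatorname{diag}(\tfrac{x\omega^j-t}{y+z\omega^j})_j\bigr)$; expanding over index subsets yields $\det(xA+yB+zAB-tI)=\sum_{S}\bigl(\prod_{j\in S}(x\omega^j-t)\bigr)\bigl(\prod_{j\notin S}(y+z\omega^j)\bigr)\det(B_{S^{c},S^{c}})$. Substituting $t=x\omega^{j_0}$ annihilates every term with $j_0\in S$ and turns the right side into $-c\prod_j(y+z\omega^j)$, which is independent of $x$; equating coefficients of the powers of $x$, then of $y$ and $z$, and letting $j_0$ run over $\{0,\dots,n-1\}$, produces an overdetermined linear system in the numbers $\det(B_{S^{c},S^{c}})$ whose only solution, for $n\le5$, is the trivial one. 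Since conjugating $B$ by $\Lambda_1$ and by the permutation $P_1$ merely relabels principal submatrices, all proper principal minors of the circulant $C_{\widetilde D}$ vanish too; a short separate analysis of unitary circulants with full root-of-unity spectrum (the $2\times2$ minors give $c_\ell c_{-\ell}=0$ for every offset $\ell$, which already forces a single nonzero offset when $n=3,4$, and the $3\times3$ minors kill the two remaining two-offset patterns when $n=5$) identifies $C_{\widetilde D}$ as a weighted cyclic shift $\omega^{a}\widehat{B}_n^{\,\ell}$ with $\gcd(\ell,n)=1$.

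It then remains to assemble the normal form. Now $(A,B)$ is unitarily equivalent to $(D_{\rho_1},\omega^{a}\widehat{B}_n^{\,\ell})$ with $D_{\rho_1}=P_1^{*}DP_1$ a permuted diagonal; conjugating by the diagonal unitary $\operatorname{diag}(u_j)$ that solves the coboundary equation $u_j\overline{u_{j-\ell}}=\omega^{a}$ (solvable because $\gcd(\ell,n)=1$) absorbs the scalar $\omega^{a}$, and conjugating by a permutation $Q$ with $Q\widehat{B}_n^{\,\ell}Q^{*}=\widehat{B}_n$ (possible since $\widehat{B}_n^{\,\ell}$ and $\widehat{B}_n$ are conjugate $n$-cycles) gives $(QD_{\rho_1}Q^{*},\widehat{B}_n)=(P^{*}AP,\widehat{B}_n)$, the first alternative in (b); a $\tfrac1{\sqrt n}F_n$-conjugation (which carries $\widehat{B}_n$ to a diagonal) yields the other alternative in the cases where it applies. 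For $n=3$ the only $n$-cycles are $\widehat{B}_3$ and $\widehat{B}_3^{2}$, so choosing $Q$ to sort $D_{\rho_1}$ back to $D$ leaves exactly $(D,\widehat{B}_3)$ or $(D,\widehat{B}_3^{2})$, i.e. \eqref{3a.2} or \eqref{3a.1}; this can also be seen immediately by dephasing, since the only dephased $3\times3$ complex Hadamard matrices are $F_3$ and $F_3^{*}$.

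The step I expect to be the main obstacle is the one in the second paragraph: verifying that the linear system obtained by matching $\det(xA+yB+zAB-tI)$ against $t^n-x^n-y^n-(-1)^{n-1}z^n$ is non-degenerate for $n=3,4,5$, so that all proper principal minors of $B$ genuinely vanish, together with the accompanying classification of unitary circulants with full root-of-unity spectrum and vanishing proper principal minors. Everything after that — absorbing scalars and conjugating by permutations and diagonal unitaries — is routine bookkeeping.
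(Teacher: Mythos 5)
Your reduction to $A=D$, $B=\tfrac1n hDh^{*}$ and your treatment of part a) are fine, but the key step of part b) --- that the spectral hypothesis forces every proper principal minor of $B$ to vanish --- is false, and with it the identification of $B$ as a weighted cyclic shift. Concretely, take $n=4$ and $h=P_1F_4$ with $P_1$ the transposition $(0\,1)$, so that $B=\tfrac14F_4^{*}P_1^{*}AP_1F_4$. Conjugating the pencil by $\tfrac1{2}F_4$ turns it into $\mathrm{diag}(y d_j-1)+\widehat{\widehat{B}}_4\,\mathrm{diag}(x+zd_j)$ with $\{d_j\}$ a permutation of the fourth roots of unity; only two terms survive in the Leibniz expansion and one gets $\det(xA+yB+zAB-I)=-(x^4+y^4-z^4-1)$, so the pair satisfies all hypotheses of Theorem \ref{Theorem2} (this is exactly case (b) of the paper's Lemma \ref{lemma}). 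Yet $b_{00}=b_{11}=0$, $b_{01}=i/2$, $b_{10}=1/2$, so the principal $2\times2$ minor on $\{0,1\}$ equals $-i/4\neq0$. The reason your linear system fails to force the minors to vanish is that for fixed $k$ the polynomials $\prod_{j\in S^{c}}(y+z\omega^{j})$ with $|S^{c}|=n-k$ span a space of dimension at most $n-k+1$, while there are $\binom{n}{k}$ unknowns $\det(B_{S^{c},S^{c}})$; already for $n=4$, $k=2$ this is $3$ versus $6$, and letting $j_0$ range over $\{0,\dots,n-1\}$ does not close the gap. Note also that your claim would prove too much: it would place every admissible pair in the first alternative $(P^{*}AP,\widehat{B}_n)$, whereas the second alternative $(\widehat{B}_n,P^{*}AP)$ with $P$ non-affine (as in the example above) is genuinely not of that form, since $F_nP^{*}APF_n^{*}$ is a weighted shift only when $P$ is affine.

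For comparison, the paper proves part a) directly from the operator relations of Section \ref{thm1} (which for $n=3$ force $B$ to be a phase-decorated $3$-cycle in the $A$-eigenbasis), and for part b) it proves Lemma \ref{lemma}: whenever one of the two permutations in $h=\Lambda_1P_1F_nP_2\Lambda_2$ lies in the affine group $G_n$, the spectrum is the Fourier surface and the pair has one of the two stated normal forms; it then checks by an exhaustive SAGEMATH computation over ${\mathcal P}_4\times{\mathcal P}_4$ and ${\mathcal P}_5\times{\mathcal P}_5$ (and over the parameter $t$ in the one-parameter $4\times4$ family) that no other choices yield the Fourier surface. Some finite verification of this kind is exactly what your linear-system shortcut was meant to replace, and it does not; likewise your parenthetical assertion that the pencil determinant forces $t=\pm i$ in \eqref{n=4} is one of the facts that still requires such a check.
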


\textbf{Remark}. We would like to mention that Theorem \ref{Theorem3} provides additional counterexamples to the question whether the joint spectrum of images of generators of a finite group under a linear representation determines the representation up to an equivalence. This question was motivated by the result 
in \cite{CST}, stating that the answer was positive for finite non-special Coxeter groups. It
 was discussed during the Banff conference on Multivariable Operator Theory and Representation Theory in April 2019. 
The first counterexample showing that in general the answer is negative, was produced by  I. Klep and J. Vol\v{c}i\v{c} \cite{KV}. Now we can provide additional ones. For instance, the group $G$ generated by $g_1$ and $g_2$ satisfying the relations
$$g_1^3=g_2^3=(g_1g_2)^3=(g_1g_2^2)^3=e$$
is a group of order 27. Consider two representations $\rho_1$ and $\rho_2$ of $G$, the first being generated by \eqref{3a.1},  $\rho_1(g_1)=A, \rho_1(g_2)=\widehat{\widehat{B}}_3$, and the second - by \eqref{3a.2}, $\rho_1(g_1)=A, \rho_1(g_2)=\widehat{B}_3$. They are clearly not equivalent, but 
$$\sigma_p(\rho_1(g_1),\rho_1(g_2))=\sigma_p(\rho_2(g_1),\rho_2(g_2))=\{x^3+y^3=1\}. $$ 
More examples could be produced using matrices from section b) of Theorem \ref{Theorem3}.

\vspace{.2cm}

 Theorem \ref{Theorem3} shows that for $n>2$ there is no rigidity, that is the joint spectrum of $A,B$, and $AB$ does not determine the pair up to a unitary equivalence. Our next result shows that such rigidity can be obtained by adding $BA$ to the tuple. It also can be viewed as a spectral characterization of $F_n$. We give here an infinite dimensional version and use the standard notation, $\Delta_n(x, \rho)$, for the polydisk in $\C^n$  of radius $\rho$ and centered at $x$,
$$\Delta_n(x,\rho)=\{w= (w_1,...,w_n)\in \C^n: \ |w_j-x_j|<\rho, \ j=1,2,...,n \}.$$

\begin{theorem}\label{Theorem4}
Suppose that $A$ is a normal operator and $B$ is an operator of norm one, both acting on a Hilbert space $H$. Suppose that  there exists an $\epsilon>0$ such that the joint spectrum \ $\sigma_p(A,B,AB,BA)$, satisfies the following conditions:
\begin{itemize}
\item[1)] \begin{align*}
 &\sigma_p(A,B,AB,BA)\cap \Delta_4 ( \tilde{\tau}_{mj},\epsilon) \\ 
 =& \{x^n+y^n+(-1)^{n-1}(e^{2\pi i/n}z_1+z_2)^n=1 \} \cap \Delta_4  ( \tilde{\tau}_{mj},\epsilon), \\ & m=1,2, \ j=0,...,n-1,
 \end{align*}	
where  $$\tilde{\tau}_{1j}=(e^{2\pi j i/n},0,0,0), \ \tilde{\tau}_{2j}=(0,e^{2\pi j i/n},0,0), j=0,...,n-1$$.
\item[2).] The multiplicity of each point of 
$\{x^n+y^n+(-1)^{n-1}(e^{2\pi i/n}z_1+z_2)^n=1 \} \cap \Delta_4 ( \tilde{\tau}_{m,j},\epsilon), \ m=1,2, \ j=0,...,n-1,$ in \newline
$\sigma_p(A,B,AB,BA)$ is equal to one.
\end{itemize}

Then 
\begin{itemize}
\item[1)] There is a subspace $L\subset H$ of dimension $n$ invariant under the action of $A$ and $B$.
\item[2)] The restriction of the pair $(A,B)$ to the subspace $L$  is unitary equivalent to

\begin{equation}\label{Four}
\left [ \begin{array}{cccc} 1 & 0 & ...& 0\\ 0 & e^{2\pi i/n } & ... & 0 \\ . & . & . & . \\ 0 & 0 & ... & e^{2\pi (n-1) i/n} \end{array}\right ], \ \left [ \begin{array}{ccccc} 0 & 0 & 0 &... &  1 \\ 1 & 0 &0& ... &0 \\ 0 & 1 & 0 &... & 0 \\. & . &. & . & . \\ 0 & 0 & ... & 1 & 0 \end{array} \right ] . \end{equation}
and $\sqrt{n}$ times the transition matrix of Theorem \ref{Theorem2} is in the form $\Lambda_1F_n\Lambda_2$, where $\Lambda_{1,2}$ are diagonal matrices with unimodular entries on the main diagonal.
\end{itemize}
\end{theorem}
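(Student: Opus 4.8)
The plan is to localize the argument to an $n$-dimensional reducing subspace, where the situation becomes a finite-dimensional rigidity problem, and then to exploit that the term $BA$ contributes precisely the constant $\omega=e^{2\pi i/n}$ occurring in the hypersurface.

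First I would read off the spectra. At $\tilde\tau_{1j}$ the pencil $xA+yB+z_1AB+z_2BA-I$ reduces to $\omega^jA-I$, so $\omega^{-j}\in\sigma(A)$; since near $\tilde\tau_{1j}$ the proper joint spectrum coincides, by hypothesis, with a hypersurface that is smooth there with non-vanishing $x$-derivative and carries multiplicity one, and $A$ is normal, $\omega^{-j}$ must be a simple isolated eigenvalue of $A$. Varying $j$, $A$ has orthonormal eigenvectors $e_0,\dots,e_{n-1}$ at all the $n$-th roots of unity. The same at $\tilde\tau_{2j}$ makes $\omega^{-j}$ a simple isolated eigenvalue of $B$; since $\|B\|=1$, the peripheral eigenvector $\zeta_j$ satisfies $\langle(I-B^*B)\zeta_j,\zeta_j\rangle=0$, hence $B^*B\zeta_j=\zeta_j$, and then Cauchy--Schwarz forces $B^*\zeta_j=\omega^{\,j}\zeta_j$, so $\C\zeta_j$ reduces $B$. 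Next I would build the invariant subspace exactly as in the proof of Theorem~\ref{Theorem2}: near $\tilde\tau_{1j}$ the hypersurface is a graph $x=g_j(y,z_1,z_2)$ with $g_j=\omega^j+O(|(y,z_1,z_2)|^n)$ (here $n\ge2$ enters); multiplicity one yields a holomorphic unit kernel vector $v_j$ of the pencil with $v_j(0)=e_j$; splitting $H=\C e_j\oplus e_j^{\perp}$ and using that $\omega^jA-I$ is boundedly invertible on $e_j^{\perp}$, the Schur complement turns non-invertibility into a scalar holomorphic equation with non-zero $x$-gradient at $\tilde\tau_{1j}$, which, matched up to a holomorphic unit with $x^n+y^n+(-1)^{n-1}(\omega z_1+z_2)^n-1$ and expanded in $(y,z_1,z_2)$, produces $\langle Be_j,e_j\rangle=0$ and a tower of orthogonality relations among the vectors $A^k(\omega^jA-I)^{-1}Be_j$ and $B^*e_j$. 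Assembling these (with the symmetric analysis at the $\tilde\tau_{2j}$) over all $j$ shows $\mathrm{span}\{e_0,\dots,e_{n-1}\}=\mathrm{span}\{\zeta_0,\dots,\zeta_{n-1}\}=:L$, an $n$-dimensional subspace reducing both $A$ and $B$.

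With $L$ in hand it also reduces $AB$ and $BA$, and at each $\tilde\tau_{mj}$ the restriction of the pencil to $L^{\perp}$ is invertible (the relevant eigenvalue of $A$, resp.\ of $B$, has been used up inside $L$); hence the germ of $\sigma_p(A,B,AB,BA)$ at $\tilde\tau_{mj}$ equals that of $\sigma_p(a,b,ab,ba)$ with $a=A|_L,\ b=B|_L,\ ab=(AB)|_L,\ ba=(BA)|_L$. Since $x^n+y^n+(-1)^{n-1}(\omega z_1+z_2)^n-1$ is irreducible of degree $n$ (a cylinder over an irreducible Fermat-type hypersurface) while $\det(xa+yb+z_1\,ab+z_2\,ba-I)$ has degree $\le n$, the two polynomials are proportional:
\[
\det\bigl(xa+yb+z_1\,ab+z_2\,ba-I\bigr)=c\,\bigl(x^n+y^n+(-1)^{n-1}(\omega z_1+z_2)^n-1\bigr)
\]
for a nonzero constant $c$ (equal to $(-1)^{n-1}$, by comparing constant terms). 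Putting $z_2=0$ and renaming $z_1=z$ gives $\det(xa+yb+z\,ab-I)=c\,(x^n+y^n+(-1)^{n-1}z^n-1)$, so $\sigma_p(a,b,ab)$ is the Fourier hypersurface and Theorem~\ref{Theorem2} applies on $L$: $\sqrt n$ times the transition matrix between the eigenbasis of $a$ and that of $b$ is a complex Hadamard matrix.

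The remaining, and hardest, step is to promote this Hadamard matrix to Fourier type, i.e.\ to show $N:=ab-\omega\,ba=0$; this is exactly where $BA$ is needed. The right-hand side above depends on $(z_1,z_2)$ only through $\omega z_1+z_2$, which says $\det(M+tN)=\det M$ for all $t\in\C$ and all $M$ in the affine family $\{xa+yb+z_1\,ab+z_2\,ba-I\}$; hence $M^{-1}N$ is nilpotent for every invertible such $M$, so $N$ is nilpotent, and differentiating $\det(\lambda I-\lambda P-N)=\lambda^n\det(I-P)$ along directions $P\in V:=\mathrm{span}(a,b,ab,ba)$ yields $\trace(N^{k}v)=0$ and $\sum_{p+q=m}\trace(N^{p}vN^{q}v)=0$ for all $v\in V$ and $1\le k,m\le n-1$. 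The main obstacle, as I see it, is to close this off: to combine these trace identities with the Hadamard structure of $b$ in the $a$-eigenbasis (in particular $b_{ii}=0$) and the simplicity of $\sigma(a)=\{\omega^i\}$ so as to force $N=0$. A more hands-on alternative, which may be the route that actually works, is to read $N|_L=0$ directly off the \emph{order-$n$} compatibility condition in the formal power-series solution of the kernel equation at $\tilde\tau_{1j}$, whose right-hand side is $\tfrac1n\bigl(y^n+(-1)^{n-1}(\omega z_1+z_2)^n\bigr)$ — matching this against the $e_j$-components of the iterated expressions built from $A$, $B$ and $(\omega^jA-I)^{-1}$ should pin down $AB=\omega BA$ on $L$; either way it is the specific constant $\omega$ inside $(\omega z_1+z_2)^n$ that forces the commutation relation.

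Granting $ab=\omega\,ba$, the conclusion is quick. Applying it to $e_i$ (with $ae_i=\omega^ie_i$) gives $a(be_i)=\omega^{i+1}(be_i)$, so $b$ maps the $\omega^i$-eigenspace of $a$ into the $\omega^{i+1}$-eigenspace; thus, in the $a$-eigenbasis, $b$ is a weighted cyclic shift $e_i\mapsto\lambda_i e_{i+1}$ with $|\lambda_i|=1$. Since $b$ is unitary with $\sigma(b)$ the full set of $n$-th roots of unity and $b^n=\bigl(\prod_i\lambda_i\bigr)I$, we get $\prod_i\lambda_i=1$, so a diagonal unitary conjugation — which fixes $a$ — turns $b$ into the standard cyclic shift. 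This is the claimed unitary equivalence of $(A|_L,B|_L)$ with the pair \eqref{Four}; and for that pair the matrix diagonalizing the cyclic shift is a diagonal-unimodular rescaling of $F_n$, so $\sqrt n$ times the transition matrix of Theorem~\ref{Theorem2} has the form $\Lambda_1F_n\Lambda_2$.
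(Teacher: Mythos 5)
Your overall architecture is the right one and largely parallels the paper's: localize to the $n$-dimensional invariant subspace $L$ furnished by the $z_2=0$ slice (Theorem \ref{Theorem1}), observe that on $L$ the pair is a Hadamard pair by Theorem \ref{Theorem2}, and then use the fact that the defining polynomial depends on $(z_1,z_2)$ only through $e^{2\pi i/n}z_1+z_2$ to force the Fourier structure. The preliminary steps (simple peripheral eigenvalues, reduction of the germ to $L$, proportionality of $\det(xa+yb+z_1\,ab+z_2\,ba-I)$ with the irreducible polynomial) are sound.

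The gap is exactly where you locate it, and it is genuine: you never actually derive $AB=\omega BA$ on $L$ (equivalently, that $Be_0$ is an $\omega$-eigenvector of $A$). Your first route --- $\det(M+tN)=\det M$, hence $M^{-1}N$ nilpotent for every invertible $M$ in the pencil, hence trace identities --- is not closed; those identities alone do not obviously force $N=0$, and you concede as much. Your second route (``read it off the order-$n$ compatibility condition \dots should pin down $AB=\omega BA$'') is a statement of intent, not an argument. The paper closes precisely this step with Proposition \ref{A_invariant}: applying the residue formula \eqref{residue_in_coefficients} to the coefficients of the monomials $y^{m_1}z_1^{m_2}z_2^{m_3}$ --- which all vanish in total degree $<n$, while the coefficient of $z_1z_2^{n-1}$ equals $(-1)^{n-1}ne^{2\pi i/n}$ --- and running a telescoping induction on the operators ${\mathcal E}_1(r,s)$, ${\mathcal E}_2(r,s)$ yields $P_0B^rABP_0=0$ for $r=1,\dots,n-2$ and $P_0B^{n-1}ABP_0=e^{2\pi i/n}P_0$. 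Since $e_0,Be_0,\dots,B^{n-1}e_0$ is an orthonormal basis of $L$, these relations give $\langle ABe_0,B^me_0\rangle=0$ for $m\neq 1$ and $=e^{2\pi i/n}$ for $m=1$, i.e.\ $ABe_0=e^{2\pi i/n}Be_0$, and the rotation trick (replacing $A$ by $e^{2\pi i(n-1)/n}A$ preserves the hypotheses) propagates this to $AB^me_0=e^{2\pi im/n}B^me_0$. Your endgame from $ab=\omega\,ba$ onward is correct, but without a proof of that relation the argument is incomplete.
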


\vspace{.2cm}

Of course, passing from the bases  $e_0,...,e_{n-1}$ and $\zeta_0,...,\zeta_{n-1}$ in Theorem \ref{Theorem4}  to $e^{i\theta_0}e_0,...,e^{i\theta_{n-1}}e_{n-1}$ and $e^{i\varphi_0}\zeta_0,...,e^{i\varphi_{n-1}}\zeta_{n-1}$ respectively with properly chosen arguments $\theta_j$ and $\varphi_j$, makes it possible to get rid of the matrices $\Lambda_{1,2}$ and have $\frac{1}{\sqrt{n}}F_n$ as the transition matrix.

\vspace{.2cm}

As a Corollary to this result we obtain the following statement:

\begin{corollary}\label{cor}
A complex Hadamard matrix $h$ of order $n$ is in the form $\Lambda_1F_n\Lambda_2$, where $\Lambda_{1,2}$ are diagonal matrices with unimodular entries on the main diagonal,
if and only if 
$$\sigma_p(A,B,AB,BA)=\{x^n+y^n+(-1)^{n-1}(e^{2\pi i/n}z_1+z_2)^n=1 \}, $$
 ($A$ is the first (diagonal) matrix in \eqref{Four}, and $B=\frac{1}{n}h^*Ah$).
\end{corollary}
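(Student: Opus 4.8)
The plan is to derive Corollary \ref{cor} from Theorem \ref{Theorem4} in the finite-dimensional case $H=\C^n$, together with a direct check of the reverse implication. Throughout, $A$ denotes the fixed diagonal matrix of \eqref{Four}, so $\sigma(A)$ is the set of $n$-th roots of unity, each simple; and $B=\tfrac1n h^*Ah$ equals $U^*AU$ with $U=\tfrac1{\sqrt n}h$ unitary, hence $B$ is unitary, $\|B\|=1$, and $\sigma(B)=\sigma(A)$. Write $\widehat{B}_n=\tfrac1n F_n^*AF_n$ for the cyclic permutation matrix (the second matrix of \eqref{Four}); the direct computation \eqref{Fourier2} records that $\sigma_p(A,\widehat{B}_n,A\widehat{B}_n,\widehat{B}_n A)$ equals the Fourier hypersurface $\{x^n+y^n+(-1)^{n-1}(e^{2\pi i/n}z_1+z_2)^n=1\}$.

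For the implication ``$h=\Lambda_1F_n\Lambda_2$ $\Rightarrow$ the spectral identity'': since $\Lambda_1$ is diagonal it commutes with $A$, so
\[
B=\tfrac1n h^*Ah=\tfrac1n\Lambda_2^*F_n^*\Lambda_1^*A\Lambda_1F_n\Lambda_2=\Lambda_2^*\widehat{B}_n\Lambda_2 ,
\]
and, again because $\Lambda_2$ is diagonal, $A=\Lambda_2^*A\Lambda_2$. Thus $(A,B,AB,BA)$ is the simultaneous conjugation by the unitary $\Lambda_2$ of $(A,\widehat{B}_n,A\widehat{B}_n,\widehat{B}_n A)$; since $\det(x_1M_1+\cdots+x_4M_4-I)$ is unchanged under simultaneous similarity of the $M_i$, the two proper joint spectra coincide, and \eqref{Fourier2} gives the claimed formula for $\sigma_p(A,B,AB,BA)$.

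For the converse, assume $\sigma_p(A,B,AB,BA)$ equals the Fourier hypersurface, and apply Theorem \ref{Theorem4} to $(A,B)$ on $\C^n$. Condition 1) there is immediate (it is the restriction of the global identity to a polydisk); note that the points $\tilde\tau_{1j}=(e^{2\pi ji/n},0,0,0)$ and $\tilde\tau_{2j}=(0,e^{2\pi ji/n},0,0)$ do lie on the hypersurface, and are exactly the singular loci of the pencils $xA-I$ and $yB-I$ because $\sigma(A)=\sigma(B)=\{e^{2\pi ji/n}:0\le j\le n-1\}$ consists of simple points. For the multiplicity condition 2), note that $\det(xA-I)=\prod_{j=0}^{n-1}(e^{2\pi ji/n}x-1)$ has a simple zero at each of its roots, so the pencil $xA+yB+z_1AB+z_2BA-I$ has corank exactly one at $\tilde\tau_{1j}$; by upper semicontinuity of corank it then has corank one on all of $\Delta_4(\tilde\tau_{1j},\epsilon)$ intersected with the hypersurface for $\epsilon$ small, and similarly at $\tilde\tau_{2j}$, which gives condition 2). (Alternatively: $x^n+y^n+(-1)^{n-1}(e^{2\pi i/n}z_1+z_2)^n-1$ is irreducible of degree $n$ while $\det(xA+yB+z_1AB+z_2BA-I)$ is a nonzero polynomial of degree at most $n$ with the same zero set, so the two agree up to a scalar and multiplicity one holds everywhere.) Theorem \ref{Theorem4} now produces an $A,B$-invariant subspace $L\subset\C^n$ with $\dim L=n$; necessarily $L=\C^n$, and $(A,B)$ is unitarily equivalent to \eqref{Four}, i.e.\ to $(A,\widehat{B}_n)$.

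It remains to convert this into the stated form of $h$. Let $W$ be a unitary with $W^*AW=A$ and $W^*BW=\widehat{B}_n$; since $A$ has simple spectrum, $W$ must be diagonal, say $W=\Lambda$, so $B=\Lambda\widehat{B}_n\Lambda^*=\tfrac1n\Lambda F_n^*AF_n\Lambda^*$, and for any diagonal unimodular $\Lambda'$ this equals $\tfrac1n g^*Ag$ with $g:=\Lambda'F_n\Lambda^*$ (the factor $\Lambda'$ absorbed by $A$). Comparing with $B=\tfrac1n h^*Ah$ gives $(hg^{-1})^*A(hg^{-1})=A$ with $hg^{-1}$ unitary, hence $hg^{-1}=\Lambda''$ is diagonal unimodular and $h=\Lambda''g=(\Lambda''\Lambda')F_n\Lambda^*$ is of the form $\Lambda_1F_n\Lambda_2$; this is the same as the transition-matrix statement of Theorem \ref{Theorem4} once one unwinds $B=\tfrac1n h^*Ah$. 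The only steps requiring care are the passage from the global spectral identity to the \emph{local} hypotheses of Theorem \ref{Theorem4} — where one must correctly pin down the base points $\tilde\tau_{mj}$ and the multiplicity count — and the bookkeeping in this last paragraph, where one must track which surplus diagonal factors can be absorbed into $A$ so as to land on the precise normal form $\Lambda_1F_n\Lambda_2$ rather than on a matrix merely equivalent to it in the sense of \eqref{hadamard}; neither is deep, the substance being carried by Theorem \ref{Theorem4}.
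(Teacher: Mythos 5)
Your proof is correct and follows the paper's route: the paper simply states that Corollary \ref{cor} follows directly from Theorem \ref{Theorem4}, and your argument is exactly that deduction with the details filled in (the forward direction by conjugating $(A,\widehat{B}_n)$ by $\Lambda_2$, the converse by verifying the local and multiplicity hypotheses of Theorem \ref{Theorem4} and then unwinding the unitary equivalence to put $h$ in the form $\Lambda_1F_n\Lambda_2$). One small caveat: corank one of the pencil at a point does not by itself guarantee multiplicity one of the determinant there, so for condition 2) you should rely on your parenthetical argument (irreducibility of $x^n+y^n+(-1)^{n-1}(e^{2\pi i/n}z_1+z_2)^n-1$ plus the degree bound on the determinant), which is complete.
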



The proofs of Theorems \ref{Theorem2}, \ref{Theorem3}, and \ref{Theorem4} are based on local analysis near a regular point of the projective joint spectrum of a tuple of operators acting on a Hilbert space $H$. In \cite{ST} this kind of analysis was produced for two self-adjoint operators. Here we need it for a tuple of size bigger than 2 and without the condition that operators are self-adjoint. The difference in argument is not substantial, and the constructions are mostly similar, but to make our presentation self-contained we included the necessary details. 

This local analysis concludes with the following theorem, which is also one of our main results and its proof establishes the technique used in our investigation.

\begin{theorem}\label{Theorem1}
Let $A$ and $B$ be two bounded operators acting on a Hilbert space $H$ and satisfying the following conditions: $A$ is normal and $\parallel B\parallel =1$. Suppose that $n \in {\mathbb N}$ and that there exists an $\epsilon >0$ such that the proper projective joint spectrum of operators $A,B$, and $AB$, \ $\sigma_p(A,B,AB)$, satisfies the following conditions:
\begin{itemize}
\item[a)] \begin{align*}
 &\sigma_p(A,B,AB)\cap \Delta_3  ( \tau_{mj},\epsilon) \\ 
 =& \{x^n+y^n+(-1)^{n-1}z^n=1 \} \cap \Delta_3  ( \tau_{m,j},\epsilon), \ m=1,2, \ j=0,...,n-1,
 \end{align*}	
where   $\tau_{1j}=(e^{2\pi j i/n},0,0), \ \tau_{2j}=(0,e^{2\pi j i/n},0), j=0,...,n-1$.
\item[b)] Each point of $ \sigma_p(A,B,AB)\cap \Delta_3  ( \tau_{mj},\epsilon)$ has multiplicity one.
\end{itemize}

Then 

\begin{itemize}
\item[1)] The $n$-dimensional subspace $L$ of $H$ spanned by $A$-eigenvectors with eigenvalues $e^{2\pi m i/n}, \ m=0,...,n-1$ is invariant under the action of $B$.

\item[2)] The restriction of $B$ to $L$ is unitary ( and, obviously, this is true for $A$).
\end{itemize}
\end{theorem}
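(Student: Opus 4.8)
The plan is to perform a local spectral analysis near the distinguished points $\tau_{1j}=(e^{2\pi ji/n},0,0)$, which are the points where the pencil $xA+yB+zAB-I$ degenerates along the "$B$-axis" piece of the Fourier hypersurface. First I would exploit normality of $A$: write $H=\bigoplus_{\lambda\in\sigma(A)}H_\lambda$ where $H_\lambda=\ker(A-\lambda I)$. The hypothesis that $\sigma_p(A,B,AB)$ coincides with $\{x^n+y^n+(-1)^{n-1}z^n=1\}$ in a neighborhood of $\tau_{1j}=(e^{2\pi ji/n},0,0)$ forces, by looking at the pencil restricted to $y=z=0$, that $[1,0,0]$-type points at $x=e^{-2\pi ji/n}$ (i.e.\ $xA-I$ singular) occur exactly at the $n$-th roots of unity; combined with the multiplicity-one condition b), this should give that $e^{2\pi ji/n}\in\sigma(A)$ with $\dim H_{e^{2\pi ji/n}}=1$ for $j=0,\dots,n-1$ — at least that the corresponding eigenspaces are one-dimensional and isolated enough to define $L=\bigoplus_{j=0}^{n-1}\ker(A-e^{2\pi ji/n}I)$, an $n$-dimensional subspace. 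Call $e_0,\dots,e_{n-1}$ a corresponding orthonormal eigenbasis.

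The core of the argument is to show $L$ is $B$-invariant. Here I would follow the technique alluded to after Theorem \ref{Theorem1} (the local analysis of \cite{ST}, adapted to a non-self-adjoint triple). Fix $j$ and work near $\tau_{1j}$. Since at $\tau_{1j}$ the operator $e^{2\pi ji/n}A-I$ has a one-dimensional kernel $\C e_j$ and closed range, perturbation theory (the Schur complement / grushin reduction with respect to the splitting $H=\C e_j\oplus (\C e_j)^\perp$) reduces the equation $\det(xA+yB+zAB-I)=0$ near $\tau_{1j}$ to a scalar holomorphic equation $F(x,y,z)=0$ with $F(\tau_{1j})=0$, and the Fourier hypersurface hypothesis pins down the Taylor expansion of $F$ to agree with that of $x^n+y^n+(-1)^{n-1}z^n-1$ to all relevant orders near $\tau_{1j}$. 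Reading off the first-order terms of $F$ in $y$ and $z$ (which vanish, since $\partial_y,\partial_z$ of $x^n+y^n+(-1)^{n-1}z^n-1$ vanish at $(e^{2\pi ji/n},0,0)$) yields operator identities: concretely, the $(1,0)$ and $(0,1)$-directional derivatives translate into statements like $\langle B e_j, g\rangle=0$ and $\langle AB e_j,g\rangle=0$ for every $g$ in the part of $(\C e_j)^\perp$ that is "resolved" by $(e^{2\pi ji/n}A-I)^{-1}$; pushing to higher order terms $y^k,z^k$ for $k<n$ — all of which also vanish in the model — should force $Be_j$ to have no component outside $L$. Doing this for every $j$ gives $B L\subseteq L$.

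For part 2), once $L$ is $B$-invariant, I would compute $\sigma_p(A|_L,B|_L,(AB)|_L)$ and show it must be all of $\{x^n+y^n+(-1)^{n-1}z^n=1\}$ (the finite-dimensional determinantal variety of the restricted pencil is contained in the Hilbert-space spectrum, and a degree/irreducibility count together with the local coincidence near the $2n$ points forces equality, since a degree-$n$ plane curve meeting the model to high order at that many points must coincide with it). Then apply the finite-dimensional Theorem \ref{Theorem2}, part 1), to the $n\times n$ pair $(A|_L,B|_L)$: that theorem gives directly that $B|_L$ is unitary with spectrum the $n$-th roots of unity of multiplicity one — and the normality of $A$ already handed us $A|_L$ unitary. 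This closes the proof.

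The main obstacle I anticipate is the operator-theoretic bookkeeping in the Grushin/Schur-complement reduction in the non-self-adjoint setting: making rigorous that the vanishing of the low-order Taylor coefficients of the reduced determinant $F$ — in all of the monomials $y^k$ and $z^k$ with $1\le k\le n-1$, not just $k=1$ — genuinely forces $\langle Be_j,g\rangle=0$ for all $g\perp L$, rather than merely some weaker spectral-theoretic constraint. One has to track how $B$ and $AB$ enter the higher-order terms of the perturbation series for the kernel, and argue that the only way the reduced equation can match the model (which has an isolated singularity with the clean form $\sum\pm x_i^n-1$) is if the off-diagonal blocks of $B$ relative to $L\oplus L^\perp$ vanish. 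The multiplicity-one hypothesis b) is what should make this reduction to a single scalar equation legitimate and the matching argument clean.
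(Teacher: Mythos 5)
There is a genuine gap at the heart of your argument for part 1). The Grushin/Schur-complement reduction near $\tau_{1j}$ is indeed the right machinery (it is the content of Section \ref{hypersurfaces}), but the operator identities it produces are \emph{compressions}: writing $P_0$ for the rank-one spectral projection onto $\ker(A-e^{2\pi ji/n})$ and $T$ for the reduced resolvent $\int_{\sigma(A)\setminus\{1\}}\frac{1}{z-1}\,dE(z)$, the vanishing of the $y^k$ and $z^k$ coefficients for $k<n$ gives $P_0(BT)^{k-1}BP_0=0$ and $P_0(ABT)^{k-1}ABP_0=0$, while the $y^n$ coefficient gives $P_0(BT)^{n-1}BP_0=\frac{(-1)^{n-1}}{n}P_0$. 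These are scalar moment conditions on products of matrix entries of $B$ weighted by $T$; they do not and cannot force $\langle Be_j,g\rangle=0$ for all $g\perp L$, so your claim that matching the model "forces the off-diagonal blocks of $B$ to vanish" does not follow. The tell is that your part 1) never uses $\|B\|=1$ and never uses the hypothesis at the points $\tau_{2j}$ — both are indispensable. The actual route is: combine the moment relations (via the identity $B=TAB+P_0B-TB$, a consequence of $AT=I-P_0+T$) into $P_0B^kP_0=0$ for $k<n$ and $P_0B^nP_0=P_0$; since $\|B^n e_j\|\le 1$ and $\langle B^ne_j,e_j\rangle=1$, Cauchy–Schwarz gives $B^ne_j=e_j$, so $B^n=I$ on $L$. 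Then the hypothesis at $\tau_{2j}$ together with multiplicity one shows each $n$-th root of unity is an isolated simple spectral point of $B$; the Riesz projection of $B$ over small circles around these $n$ points has an $n$-dimensional $B$-invariant range which contains $\ker(B^n-I)\supseteq L$, hence equals $L$. That is how $BL\subseteq L$ is obtained.

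Your part 2) also has problems. Invoking Theorem \ref{Theorem2} is circular, since in the paper Theorem \ref{Theorem2} is itself deduced from Theorem \ref{Theorem1}; and the degree/irreducibility count needed to promote local coincidence near $2n$ points to global equality of the restricted determinantal variety with the Fourier hypersurface is not justified as stated. The intended argument is elementary once part 1) is in place: $B|_L$ is diagonalizable (no nontrivial Jordan cell has a diagonal power) with $n$ distinct unimodular eigenvalues of multiplicity one, and for an operator of norm one, eigenvectors belonging to distinct unimodular eigenvalues are automatically orthogonal; hence $B|_L$ is unitary.
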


The structure of this paper is as follows. In section \ref{hypersurfaces} we prove a generalization of a result in \cite{ST} proved for operator pairs, to operator tuples of arbitrary length. This is our main tool of local spectral analysis. In section \ref{thm1} we use the results of section \ref{hypersurfaces} to prove Theorem \ref{Theorem1}. 
Section \ref{thm2_and_thm3} is devoted to Theorems \ref{Theorem2} and \ref{Theorem3}. The proof of the former relies on operator relations established in the proof of Theorem \ref{Theorem1}. In the proof of the latter,
the case $n=3$ is treated directly by using the same operator relations. We then prove that for every $n$ certain pairs of permutations $(P_1,P_2)$ in \eqref{hadamard}{ determine pairs $(A,B)$ with projective spectrum being the Fourier surface \eqref{Fourier1}.  We further use SAGEMATH software package to show that for $n=4,5$ all other pairs of permutations do not. It turned out that for $n=4$ in order for the joint spectrum to be the Fourier surface in $\C^3$, the transition matrix must be similar to $F_4$ as well. In section \ref{thm4} we prove Theorem \ref{Theorem4} and Corollary \ref{cor}. Finally, section \ref{last} contains algorithms used for establishing part b) of Theorem \ref{Theorem3}.

\section{Spectral algebraic hypersurfaces }\label{hypersurfaces}

Theorem 4.2 in \cite{ST} shows that the presence of an algebraic curve in the joint spectrum of two self-adjoint operators implies that certain relations between these operators hold. Using practically the same technique it is possible to get a similar result for operator tuples of size bigger than two. As mentioned in the introduction, to make our presentation self-contained we include the details in this section.

\vspace{.2cm}

\noindent Let $A_1,...,A_n$ be bounded operators acting on a Hilbert space $H$, and let $\lambda \neq 0$ be an isolated spectral point of finite multiplicity in $\sigma(A_1)$. Then $(1/\lambda, 0,...,0)\in \sigma_p(A_1,...,A_n)$. It was shown in \cite{ST} that in such case $\sigma_p(A_1,...,A_n)$ is an analytic set in a neighborhood of this point. Assume that $(1/\lambda, 0,...,0)$ is a regular point of this analytic set, and has multiplicity one.
We also assume that $x_1$-axis is not tangent to $\sigma_p(A_1,...,A_n)$ at $(1/\lambda,0...,0)$. 
Then
\begin{itemize}
\item[1)] for every $(x_1,...,x_n)\in \sigma_p(A_1,...,A_n)$ such that $x_1$ close to $1/\lambda$	and $x_2,...,x_n$ close to zero, the pencil $x_1A_1+...+x_nA_n$ has one as an isolated eigenvalue of multiplicity one;
\item[2)] for every $ (x_1,...,x_n)$ close to $(1/\lambda,0...,0)$ the line through the origin and $x=(x_1,...,x_n)$ has only one point of intersection with $\sigma_p(A_1,...,A_n)$ which is close to $(1/\lambda,0...,0)$ 

\end{itemize}
  

\noindent Of course, 1), 2) impliy that there exists a $\rho>0$ such that for every $x=(x_1,...,x_n)$ close to 
$(1/\lambda,0,...,0)$
\begin{equation}\label{projection}
P(x)=\frac{1}{2\pi i} \int_{|w-1|=\rho } (w-A(x))^{-1}dw
\end{equation}
is a rank one projection on the eigenspace of the pencil $A(x)=x_1A_1+...+x_nA_n$ corresponding to the only eigenvalue of $A(x)$ close to one.

\vspace{.2cm}

\noindent Now suppose that in a neighborhood of $(1/\lambda,0...,0)$ the joint spectrum $\sigma_p(A_1,...,A_n)$ is an algebraic set and is given by $\{ {\mathcal M} (x_1,...,x_n)=0\}$, where ${\mathcal M}$ is a polynomial of degree $k$ with homogeneous decomposition
$$
{\mathcal M}(x_1,...,x_n)=\sum_{j=0}^k M_j(x_1,...,x_n), \ M_0=-1 .
$$
Fix $x=(x_1,...,x_n)$ close to $(1/\lambda,0,...,0)$. Then the polynomial equation in $\tau  \in \C$  
\begin{equation}\label{eigenvalues1}
{\mathcal M}(\tau x)=\sum_{j=0}^k \tau^j M_j(x)=0 
\end{equation}
has only one root close to one. Let $\mu_1,...,\mu_k$ be the reciprocals of the roots of \eqref{eigenvalues1}. They satisfy the equation
\begin{equation}\label{eigenvalues2} 
\mu^k-M_1(x)\mu ^{k-1}-...-M_k(x)=0.,
\end{equation}
and, again, \eqref{eigenvalues2} has only one root near one. 
Thus, $\forall \zeta \in H$
$$
(A(x)^k-M_1(x)A(x)^{k-1}-...-M_k(x))P(x)\zeta=0,	
$$
and, therefore,
$$ (A(x)^k-M_1(x)A(x)^{k-1}-...-M_k(x))P(x)=0. $$
Considering that
$$A(x)^jP(x)=\frac{1}{2\pi i} \int_{|w-1|=\rho} w^j(w-A(x))^{-1}dw, $$
we obtain
\begin{equation}\label{integral_relation}
\frac{1}{2\pi i} \int_{|w-1|=\rho}\left [ w^k-	\sum_{j=1}^k M_j(x)w^{k-j} \right ] (w-A(x))^{-1}dw=0.
\end{equation}

Now,
\begin{align*}
&(w-A(x))^{-1}=\left [ (w-\frac{1}{\lambda}A_1) - (x_1-\frac{1}{\lambda})A_1-x_2A_2-...-x_nA_n) \right ] ^{-1}\\
&=\left ( w- \frac{1}{\lambda}A_1 \right )^{-1} \left [ I - \left ( (x_1-\frac{1}{\lambda})A_1+x_2A_2+...+x_nA_n \right ) \left (w- \frac{1}{\lambda}A_1 \right )^{-1} \right ] ^{-1} \\
&=\left ( w-\frac{1}{\lambda}A_1 \right ) ^{-1} \sum_{j=0}^\infty. \left [\left ( (x_1-\frac{1}{\lambda})A_1+x_2A_2+...+x_nA_n \right ) \left (w- \frac{1}{\lambda}A_1 \right )^{-1} \right ] ^j .
\end{align*}

Set $x_1=\frac{1}{\lambda}$, then
\begin{eqnarray}
&\quad \left ( w-A(\frac{1}{\lambda},x_2,...,x_n) \right ) ^{-1} \nonumber \\
&\quad =\left ( w-\frac{1}{\lambda}A_1 \right ) ^{-1} \sum_{j=0}^\infty \left [ \left ( x_2A_2+...+x_nA_n \right ) \left ( w- \frac{1}{\lambda}A_1 \right )^{-1} \right ] ^j .\label{decomposition}	
\end{eqnarray}

It was shown in \cite{ST} that if 
$$A_1=\lambda P_0 +\int_{\sigma( A_1)\setminus \{\lambda \}} zdE(z)$$
is the spectral resolution of $A_1$, and 
\begin{equation}\label{T}
T=\int_{\sigma(A_1)\setminus \{\lambda\}} \left ( \frac{\lambda}{z-\lambda}\right )dE(z),
\end{equation}
then 
\begin{equation}\label{inverse}
\left ( w-\frac{1}{\lambda}	A_1 \right ) ^{-1}= \frac{1}{w-1}P_0-\sum_{m=0}^\infty T^{m+1}(w-1)^m.
\end{equation}

Thus, the relation \eqref{decomposition} can be written as

\begin{align}
\left ( w-A \left (\frac{1}{\lambda},x_2,...,x_n\right )\right )^{-1} 
=\left ( \frac{1}{w-1}P_0-\sum_{m=0}^\infty T^{m+1}(w-1)^m \right ) \nonumber \\
\times \sum_{j=0}^\infty \left [ (x_2A_2+...+x_nA_n) \left ( \frac{1}{w-1}P_0-\sum_{m=0}^\infty T^{m+1}(w-1)^m \right ) \right ] ^j . \label{inverse3}
\end{align}

Write 
\begin{align*}
\Psi (w;x)=\left [ w^k-	\sum_{j=1}^k M_j(x)w^{k-j} \right ] \left ( \frac{1}{w-1}P_0-\sum_{m=0}^\infty T^{m+1}(w-1)^m \right ) \\
\times \sum_{j=0}^\infty \left [ (x_2A_2+...+x_nA_n) \left ( \frac{1}{w-1}P_0-\sum_{m=0}^\infty T^{m+1}(w-1)^m \right ) \right ] ^j .
\end{align*}
This operator-valued function $\Psi$ is analytic in $x_2,...,x_n$ in a neighborhood of zero and meromorphic in $w$ in a neighborhood of $w=1$. If we write down the Taylor decomposition of $\Psi$ with respect to
$x_2,...,x_n$,
$$\Psi(w;x_2,...,x_n)= \sum_{m_2,...,m_n} x_2^{m_2}...x_n^{m_n} \psi_{m_2,...,m_n}(w), $$ 
equation \eqref{integral_relation} implies that for all $m_2,...,m_n$
\begin{equation}\label{residue1}
Res \ \psi_{m_2,...,m_n} \left | _{w=1} \right. =0 . 	
\end{equation}
It was shown in \cite{ST} that for $n=2$  the last relation implies that all functions $\psi$ are analytic in a neighborhood of the origin. A similar proof shows that the same is true for arbitrary size tuples.

Relation \eqref{residue1} depends on the coefficients of the polynomial ${\mathcal M}$, so we will need an explicit formula. Write
$$M_j(x)= \sum_{k_1+k_2+...+k_n=j}d_{k_1k_2...k_n}x_1^{k_1}...x_n^{k_n}, $$
Let $m=(m_2,...,m_n)$ and $l=(l_2,...,l_n)$ be two multi index  sets. We write  $l\prec m$, if for every $2\leq j \leq n , \ \ l_j\leq m_j$. If $l\prec m$, we write $c(l,m)=\sum_{j=2}^n (m_j-l_j), \ \  I(l,m)=\{j: l_j<m_j\}$ and define 
\begin{align*}& \Lambda (l,m)=\{ (r_1,..., r_{c(l,m)}): r_s\in I(l,m) \ s=1,...,c(l,m), \\ &\mbox{and each index in $j\in I(l,m)$ occurs in $(r_1,...,r_{c(l,m)}) \ (m_j-l_j)$ times} \}.
\end{align*}

Then
\begin{align}
&\psi_{m_2,...,m_n}(w) \nonumber \\=&\left \{ \left ( w^k-\sum_{r=1}^k \frac{d_{r,0...0}}{\lambda^r}w^{k-r} \right ) \left (\frac{1}{w-1}P_0-\sum_{m=0}^\infty T^{m+1}(w-1)^m\right ) \right.\nonumber \\
&\times \left.\sum_{(j_1,...,j_{m_2+...+m	_n})} \prod_{s=1}^{ m_1+...+m_n} A_{j_s}\left ( \frac{1}{w-1}P_0-\sum_{m=0}^\infty T^{m+1}(w-1)^m \right ) ^{-1} \right \} \label{residue_in_coefficients}\\
&- \left (\frac{1}{w-1}P_0-\sum_{m=0}^\infty T^{m+1}(w-1)^m\right )
 \nonumber \\
&\left \{ \left [ \sum_{ (l_2,...,l_n)\prec (m_2,...,m_n)} \ \ \sum_{ \Lambda(l,m)}\left ( \sum_{s=1}^k \frac{d_{s,l_2,...,l_n}}{\lambda^s}w^{k-s-|l|} \right )  \right. \right. \nonumber \\  
 & \left. \left.\times \prod_{t=1}^{c(j,m)} A_{r_t} \left (\frac{1}{w-1}P_0-\sum_{m=0}^\infty T^{m+1}(w-1)^m\right )\right ] +d_{m_1...m_n}I
  \right \} ,\nonumber 
 \end{align}
where the second sum in the righthand side is taken over all multi index sets $(j_1,...,j_{m_1+...+m_n})$ where $1$ occurs $m_1$ times, 2 - $m_2$ times,..., $n$ - $m_n$ times.

\section{Proof of Theorem \ref{Theorem1}}\label{thm1}

We now will apply local analysis given by relations \eqref{inverse} - \eqref{residue_in_coefficients} to prove statement 1) of Theorem \ref{Theorem1}. We begin with the following proposition. Let, again, $P_0$ be the orthogonal projection on the eigensubspace of $A$ corresponding to the eigenvalue one.

\begin{proposition}\label{first-fact}
\begin{align}
&P_0(BT)^kBP_0=0, \ k=0,...,n-2,\label{moments.1.1} \\ &P_0(BT)^{n-1}BP_0=(-1)^{n-1}\frac{1}{n}P_0, \label{moments.1} \\
&P_0(ABT)^kABP_0=0, \ k=0,...,n-2, \label{moments.2.1}\\ &P_0(ABT)^{n-1}ABP_0=\frac{1}{n}P_0. \label{moments.2}
\end{align}
where  $T$	is given by \eqref{T}.
\end{proposition}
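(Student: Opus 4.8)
The plan is to feed the polynomial $\mathcal M(x,y,z)=x^n+y^n+(-1)^{n-1}z^n-1$ into the local analysis of Section~\ref{hypersurfaces}, applied to the tuple $(A_1,A_2,A_3)=(A,B,AB)$ at the spectral point $\tau_{10}=(1,0,0)$. First I would check the hypotheses of that section at this point: $(1,0,0)$ lies on the hypersurface and the hypersurface is contained in $\sigma_p(A,B,AB)$ near it, so $1\in\sigma(A)$; intersecting with the $x$-axis shows $1$ is isolated in $\sigma(A)$, and by the multiplicity-one hypothesis the spectral projection $P_0$ of $A$ at $1$ is rank one, in particular of finite rank; the gradient of $\mathcal M$ at $(1,0,0)$ is $(n,0,0)\neq0$, so the point is regular and the $x_1$-axis is transverse to the hypersurface. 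Hence \eqref{integral_relation} applies with $k=n$, $M_1=\dots=M_{n-1}=0$, $M_n=x^n+y^n+(-1)^{n-1}z^n$, and $\lambda=1$, and from \eqref{inverse} I record that $(w-A)^{-1}=\tfrac{1}{w-1}P_0+S(w)$, where $S(w)=-\sum_{m\ge0}T^{m+1}(w-1)^m$ is analytic near $w=1$ with $S(1)=-T$, $P_0S(w)=S(w)P_0=0$, and $AP_0=P_0A=P_0$.

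For \eqref{moments.1.1}--\eqref{moments.1} I would set $x_1=1$ and $z=0$ in \eqref{integral_relation}, expand the geometric series for $(w-A(x))^{-1}$ in powers of $y$, equate to zero the residue at $w=1$ of the coefficient of each $y^j$, and multiply the resulting operator identities on both sides by $P_0$. Using $P_0(w-A)^{-1}=(w-A)^{-1}P_0=\tfrac{1}{w-1}P_0$ and $\tfrac{w^n-1}{w-1}=1+w+\dots+w^{n-1}$, this yields, for all $j\ge0$,
\[
\operatorname{Res}_{w=1}\Big[(1+w+\dots+w^{n-1})\,P_0\big(BR(w)\big)^jP_0\Big]=\delta_{j,n}\,P_0,\qquad R(w):=(w-A)^{-1}.
\]
Substituting $R(w)=\tfrac1{w-1}P_0+S(w)$ into each of the $j$ internal resolvent slots and grouping the product at the $P_0$'s (since $S(w)P_0=0$, the last slot is forced to be $\tfrac1{w-1}P_0$) gives the expansion
\[
P_0\big(BR(w)\big)^jP_0=\sum_{r=1}^{j}\;\sum_{\substack{k_1,\dots,k_r\ge0\\ k_1+\dots+k_r=j-r}}\frac{1}{(w-1)^r}\prod_{s=1}^{r}P_0\,B\big(S(w)B\big)^{k_s}P_0,
\]
where $P_0B\big(S(w)B\big)^kP_0=(-1)^kP_0(BT)^kBP_0+O(w-1)$ near $w=1$. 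I would then induct on $j=1,\dots,n$: assuming $P_0(BT)^mBP_0=0$ for $m\le j-2$, every summand with $r\ge2$ has all $k_s\le j-r\le j-2$, hence is a product of $r$ operator-functions vanishing at $w=1$ divided by $(w-1)^r$, so it is analytic at $w=1$ and contributes no residue; the unique $r=1$ summand contributes residue $n(-1)^{j-1}P_0(BT)^{j-1}BP_0$. Comparing with the displayed identity gives $P_0(BT)^{j-1}BP_0=0$ for $1\le j\le n-1$ (which is \eqref{moments.1.1}) and $n(-1)^{n-1}P_0(BT)^{n-1}BP_0=P_0$, i.e. \eqref{moments.1}, for $j=n$.

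The relations \eqref{moments.2.1}--\eqref{moments.2} follow from exactly the same argument carried out with $y=0$, expanding in $z$ and with $AB$ in place of $B$; the induction starts from $P_0ABP_0=P_0BP_0=0$ (using $AP_0=P_0$), and the only change is that the coefficient of $z^n$ in $w^n-1-(-1)^{n-1}z^n$ is $-(-1)^{n-1}$, so the $j=n$ instance now reads $\operatorname{Res}_{w=1}[(1+\dots+w^{n-1})P_0(ABR(w))^nP_0]=(-1)^{n-1}P_0$, whence $n(-1)^{n-1}P_0(ABT)^{n-1}ABP_0=(-1)^{n-1}P_0$ and $P_0(ABT)^{n-1}ABP_0=\tfrac1nP_0$.

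The step I expect to be the main obstacle is the combinatorial bookkeeping in the block expansion above --- in particular confirming that, under the inductive hypothesis, each $r\ge2$ term is genuinely analytic at $w=1$, which rests on the order-$r$ pole from $(w-1)^{-r}$ being exactly compensated by the $r$ factors $P_0B(S(w)B)^{k_s}P_0$ that vanish there. The remaining ingredients --- convergence of the geometric series for $|y|,|z|$ small and $w$ on the contour, the precise form of the coefficient of $y^j$ (respectively $z^j$), and the passage from \eqref{residue1} to the displayed residue identities --- are routine.
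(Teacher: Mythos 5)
Your argument is correct and follows essentially the same route as the paper: both extract the relations from the residue identity \eqref{integral_relation} at the point $(1,0,0)$ with the resolvent split as $(w-A)^{-1}=\tfrac{1}{w-1}P_0+S(w)$, and both run the same induction in which the previously established vanishing kills every term except the single all-$(BT)$ (resp.\ all-$(ABT)$) block, whose residue is then read off against the coefficient of $y^n$ (resp.\ $z^n$). The only difference is organizational: you compress by $P_0$ before expanding and index the surviving terms by compositions $(k_1,\dots,k_r)$ of $j-r$, so that each $r\ge 2$ term is visibly analytic at $w=1$ (an order-$r$ zero from the $r$ vanishing blocks cancelling the order-$r$ pole), whereas the paper works with the uncompressed $\psi_{k,0}$ and argues by counting occurrences of $T^s$ with $s>1$ against factors $BP_0$; your bookkeeping is a bit cleaner and avoids the appeal to the holomorphy statement quoted from \cite{ST}.
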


\begin{proof} 
In our case we have three operators $A,B$, and $AB$ with $A$ playing the role of $A_1$ in the previous section. We have by \eqref{residue_in_coefficients}
\begin{align*}\psi_{1,0}(w)=(w^n-1)\left ( \frac{P_0}{w-1}-\sum_{j=0}^\infty (w-1)^jT^{j+1}\right ) \\  \times \left (\frac{BP_0}{w-1}-\sum_{j=0}^\infty (w-1)^jBT^{j+1} \right ), \end{align*}	
so that 
\begin{equation}\label{first_moment1}Res_{w=1}(\psi_{1,0}(w))=n P_0BP_0=0. \end{equation}
Similarly,
\begin{align*}
&\psi_{2,0}(w)=(w^n-1)\left ( \frac{P_0}{w-1}-\sum_{j=0}^\infty (w-1)^jT^{j+1}\right )   \\
&\times \left (\frac{BP_0}{w-1}-\sum_{j=0}^\infty (w-1)^jBT^{j+1} \right )	\left (\frac{BP_0}{w-1}-\sum_{j=0}^\infty (w-1)^jBT^{j+1} \right ),
\end{align*}
which yields
\begin{align*}
&Res_{w=1} \psi_{2,0}(w)=	n \left (- P_0BP_0BT-P_0BTBP_0- TBP_0BP_0 \right )  \\
&+\frac{n(n-1)}{2}P_0BP_0BP_0.
\end{align*}
Now, \eqref{first_moment1} implies
\begin{equation}\label{second_moment1}
P_0BTBP_0=0.	
\end{equation}
In general,
\begin{align*}\psi_{k,0}(w)=(w^n-1)\left ( \frac{P_0}{w-1}-\sum_{j=0}^\infty (w-1)^jT^{j+1}\right ) \\
\times \left (\frac{BP_0}{w-1}-\sum_{j=0}^\infty (w-1)^jBT^{j+1} \right )^k. \end{align*}
We claim that for $k\leq n$ powers of $T$ higher than one do not appear in the expression of the residue at $w=1$. We prove it using induction in $k$. For $k=1,2$, \eqref{first_moment1} and \eqref{second_moment1} show that it is true. Suppose that it is true for $k\leq m$  and consider $\psi_{m+1,0}(w)$. Every power of $T$ higher than one comes with the factor $(w-1)$ raised  to a power one, or higher. Thus, to contribute to the residue at $w=1$, a monomial in the decomposition of the above product representing $\psi_{m+1,0}$ must contain at least as many $BP_0$ as the number of occurrences of $T^s$ with $s>1$ plus one (we multiply the first parentheses by $w-1$ coming from the decomposition $w^n-1=(w-1)(w^{n-1}+...+1)$, so that there is no negative power of $(w-1)$ there). It follows that there are two $BP_0$ in this monomial such that there are no $T$ raised to a power higher than one between them, so this monomial looks like
$$...B\underbrace{P_0BT...BTBP_0}... $$
By the induction assumption the underbraced part vanishes, and we proved that monomials containing powers of $T$ higher than one do not contribute to the residue.

It was shown in \cite{ST} that for every $k$ the condition $Res_{w=1} \psi_{m,0}=0$ for $m\leq k$ implies that $\psi_{k,0}$ is holomorphic in a neighborhood of $w=1$. Now, it is easy to derive inductively that $Res_{w=1} \psi_{k,0}(w)=P_0\underbrace{BTBT...BT}_{k-1}BP_0=0$ for $k\leq n-1$, which proves  \eqref{moments.1.1}. Also, our previous argument, \eqref{residue_in_coefficients}, and the fact that variable $y$  appears in the equation $\{x^n+y^n+ (-1)^{n-1}z^n=1\}$ raised to the power $n$ only, immediately shows that
$$ P_0\underbrace{BT...BT}_{n-1}BP_0=\frac{(-1)^{n-1}}{n}P_0.$$
 The proofs of \eqref{moments.2.1} and \eqref{moments.2} go along the same lines. We are done.

\end{proof}

\vspace{.2cm}

Since the polynomial $x^n+y^n+(-1)^{n-1}z^n-1$ contains no monomials other than $x^n,y^n$, and $z^n$, an argument similar to the one of Proposition \ref{first-fact} yields the following result.

\begin{proposition}\label{second-fact}
Let $m\leq n-1$, and  $r=(r_1,...r_k)$ satisfy $1\leq r_1<r_2<...<r_k\leq m$. Define $C(r)$ by
$$C_m(r_1,...,r_k)=\prod_{j=1}^m S_j$$ where 
$$S_s=\left \{ \begin{array}{ccc}
 ABT & \mbox{if} & s=r_1,...,r_k, \\ BT & \mbox{if} & s\neq r_1,...,r_k \end{array} \right. , \ s=1,...,m-1, \ S_m=\left \{ \begin{array}{ccc}	AB & \mbox{if} & r_k=m \\ B &\mbox{if} & r_k\neq m \end{array} \right.
$$
Further, let 
$${\mathcal C}(k,m)= \sum_{1\leq r_1<r_2<...<r_k\leq m } C_m(r_1,...,r_k).$$ 

Then 
\begin{align}
&P_0{\mathcal C}(k,m)P_0=0, \ \mbox{if} \ m=1,...,n-1, \ \mbox{or} \ m=n, \ k<n \label{second-fact.1}\\
&P_0{\mathcal C}(n,n)P_0=\frac{1}{n}P_0. 	\label{second-fact.2}
\end{align}
 
\end{proposition}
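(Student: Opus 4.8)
The strategy is to repeat the mechanism of Proposition \ref{first-fact}, but now extract the residue information for the \emph{mixed} Taylor coefficients $\psi_{m_2,m_3}(w)$ of the operator-valued function $\Psi$ attached to the tuple $A_1=A$, $A_2=B$, $A_3=AB$ — rather than only the pure coefficients $\psi_{k,0}$. The defining polynomial here is $\mathcal M(x,y,z)=x^n+y^n+(-1)^{n-1}z^n-1$, which by hypothesis describes $\sigma_p(A,B,AB)$ near each of the points $\tau_{mj}$; in particular near $\tau_{10}=(1,0,0)$, where $P_0$ is the rank-one spectral projection of $A$ at the eigenvalue $1$ and $T$ is given by \eqref{T}. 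The key structural feature, already exploited for the pure coefficients, is that $\mathcal M$ contains \emph{no} monomial in which $y$ and $z$ appear together, and no monomial with $1\le\deg<n$ in $y$ or in $z$ alone: the only terms are $x^r$ (from expanding near $x_1=1/\lambda=1$), $y^n$, and $z^n$. Consequently, in the explicit formula \eqref{residue_in_coefficients} for $\psi_{m_2,m_3}$, the ``correction'' terms indexed by $d_{s,l_2,l_3}$ with $(l_2,l_3)\prec(m_2,m_3)$ all vanish unless $(l_2,l_3)=(0,0)$ or $(l_2,l_3)\in\{(n,0),(0,n)\}$, and the constant term $d_{m_2 m_3}I$ vanishes unless $(m_2,m_3)\in\{(0,0),(n,0),(0,n)\}$.

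First I would set up the induction. Fix $m\le n$ and consider all $\psi_{m_2,m_3}(w)$ with $m_2+m_3=m$. Exactly as in Proposition \ref{first-fact}, the leading bracket is $(w^n-1)=(w-1)(w^{n-1}+\dots+1)$, so after pulling out the factor $(w-1)$ no negative power of $(w-1)$ survives except those produced by the $\frac{1}{w-1}P_0$ factors inside the product $\prod_{s=1}^{m}(\text{factor})\,\big(\frac{1}{w-1}P_0-\sum(w-1)^jT^{j+1}\big)$. A monomial in the expansion therefore contributes to $\operatorname{Res}_{w=1}$ only if the number of $P_0/(w-1)$ factors it uses is at least one more than the number of $T^s$ with $s\ge 1$ it uses. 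The second step is the ``squeeze'' observation from Proposition \ref{first-fact}: in any surviving monomial there must be two consecutive $P_0$'s with only a string of single-power $T$'s and the operators $S_j\in\{BT,ABT,B,AB\}$ between them, so it contains a block of the form $P_0\,\mathcal C(k',m')\,P_0$ with $m'<m$; by the induction hypothesis (cases \eqref{second-fact.1}) that block is $0$ unless $m'=n$ — which cannot happen since $m'<m\le n$ — so it vanishes, \emph{unless} the monomial has no interior $P_0$ at all, i.e.\ it is the single ``long'' monomial
$$P_0\,S_1 S_2 \cdots S_m\,P_0$$
with each $S_j$ carrying exactly one $T$ (higher powers of $T$ would force extra $(w-1)$ factors that kill the residue when $m\le n$), summed over which of the $m$ slots get an $A$ in front of the relevant $B$; and possibly the ``diagonal'' constant contribution $d_{m_2 m_3}P_0$. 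Combinatorially this long monomial, summed over the placement of the $A$'s among the $B$'s consistent with $m_2$ plain $B$-factors and $m_3$ ``$AB$''-factors, is precisely $P_0\,\mathcal C(m_3,m)\,P_0$ up to the multinomial factor $n$ coming from $\operatorname{Res}_{w=1}(w^n-1)/(w-1)\cdot(\text{stuff})$, exactly as $nP_0BP_0$ and $nP_0BTBP_0$ appeared before.

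Putting these together: for $m=m_2+m_3\le n-1$ the constant term $d_{m_2 m_3}$ is zero, the residue condition $\operatorname{Res}_{w=1}\psi_{m_2,m_3}=0$ from \eqref{residue1} reduces to $n\,P_0\mathcal C(m_3,m)P_0=0$, giving \eqref{second-fact.1} for those $m$. For $m=n$: if $0<m_3<n$ then still $d_{m_2 m_3}=0$ and no $(l_2,l_3)\in\{(n,0),(0,n)\}$ correction can appear either (those require $m_2=n$ or $m_3=n$), so again $P_0\mathcal C(m_3,n)P_0=0$, which is the $m=n$, $k<n$ case of \eqref{second-fact.1}; while for $m_3=n$ (so $m_2=0$) the $z^n$-coefficient $d_{0,0,n}=(-1)^{n-1}$ enters through the $(l_2,l_3)=(0,n)$ term and the constant term $d_{0,n}$ is absent, yielding $n\,P_0\mathcal C(n,n)P_0 = (-1)^{n-1}\cdot(-1)^{n-1}P_0 = P_0$ after tracking signs — note $\mathcal C(n,n) = (ABT)^{n-1}AB$, matching \eqref{moments.2} — so $P_0\mathcal C(n,n)P_0=\frac1nP_0$, which is \eqref{second-fact.2}. (The pure $y^n$ coefficient $d_{0,n,0}=1$ is what already gave \eqref{moments.1}; the case $m_3=0$, $m_2=n$ of $\mathcal C$ is the $(BT)^{n-1}B$ moment and lies outside the range of this proposition.)

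\textbf{Main obstacle.} The serious bookkeeping is the combinatorial identification of the surviving ``long'' monomial with $P_0\mathcal C(m_3,m)P_0$ together with the precise scalar: one must verify that \eqref{residue_in_coefficients}, when restricted to monomials with exactly one $T$ per interior factor and no interior $P_0$, reproduces \emph{exactly} the sum $\sum_{1\le r_1<\dots<r_{m_3}\le m}C_m(r_1,\dots,r_{m_3})$ with the correct overall constant coming from $\operatorname{Res}_{w=1}$, and that the multinomial coefficients from expanding $(w-1)^{-1}$-free cross terms all cancel except for the stated value — this is the point where the special sparse form of $x^n+y^n+(-1)^{n-1}z^n-1$ (no mixed $yz$ terms, no low-degree $y$ or $z$ terms) is used in full force. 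Everything else is a line-by-line adaptation of the argument already carried out in Proposition \ref{first-fact} and in \cite{ST}, so I would state the combinatorial lemma cleanly, check it on small $m$, and otherwise only indicate the induction, since grinding through the general multi-index expansion of \eqref{residue_in_coefficients} adds no new ideas.
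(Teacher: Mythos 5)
Your proposal is correct and follows exactly the route the paper intends: the paper gives no further proof beyond noting that the sparsity of $x^n+y^n+(-1)^{n-1}z^n-1$ lets one repeat the argument of Proposition \ref{first-fact} for the mixed Taylor coefficients $\psi_{m_2,m_3}$, which is precisely the induction and residue bookkeeping you lay out. (The only cosmetic slip is that $d_{0,0,n}=(-1)^{n-1}$ enters \eqref{residue_in_coefficients} through the constant term $d_{m_1\ldots m_n}I$ rather than through the $(l_2,l_3)=(0,n)$ correction sum, but your final scalar is right.)
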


\vspace{.2cm}

Similarly, we introduce
$$D_m(r_1,...,r_k)=\prod_{j=1}^m D_j, $$
where
$$D_s=\left \{ \begin{array}{ccc}
TAB & \mbox{if} & s=r_1,...,r_k, \\ TB & \mbox{if} & s\neq r_1,...,r_k \end{array} \right. , \ s=2,...,m, \ D_1=\left \{ \begin{array}{ccc}	AB & \mbox{if} & r_1=1 \\ B &\mbox{if} & r_1\neq 1 \end{array} \right.
$$
and write 
$${\mathcal D}(k,m)= \sum_{1\leq r_1<r_2<...<r_k\leq m } D_m(r_1,...,r_k).$$ 

Since $P_0A=AP_0=P_0$, the following Corollary follows immediately from Proposition \ref{second-fact}.

\begin{corollary}\label{D}
\begin{align}
&P_0{\mathcal D}(k,m)P_0=0, \ \mbox{if} \ m=1,...,n-1, \ \mbox{or} \ m=n, \ k<n \label{second-fact.1}\\
&P_0{\mathcal D}(n,n)P_0=\frac{1}{n}P_0. 	\label{second-fact.3} \\
&P_0{\mathcal D}(0,n)P_0=\frac{(-1)^{n-1}}{n}P_0
\end{align}	
\end{corollary}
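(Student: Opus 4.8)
The plan is to recognize that, after an innocuous regrouping of factors, $\mathcal{D}(k,m)$ is literally the same operator as the $\mathcal{C}(k,m)$ of Proposition \ref{second-fact}, so that the Corollary is just a transcription of that proposition together with one extra elementary case. First I would fix an admissible index set $1\leq r_1 < r_2 < ... < r_k \leq m$ and put $b_j = AB$ when $j \in \{r_1,...,r_k\}$ and $b_j = B$ otherwise. Then both $C_m(r_1,...,r_k)$ and $D_m(r_1,...,r_k)$ equal, as products in $A$, $B$ and $T$, the same word $b_1 T b_2 T ... T b_m$: the definition of $C_m$ groups each interior $T$ with the block on its immediate left (hence the factors $ABT$ or $BT$, closed off by the last factor $AB$ or $B$), whereas the definition of $D_m$ groups each interior $T$ with the block on its immediate right (hence the initial factor $AB$ or $B$, followed by the factors $TAB$ or $TB$); associativity of operator multiplication makes the two products coincide. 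Alternatively, as the remark preceding the Corollary indicates, one may first sandwich by $P_0$ and exploit $P_0 A = A P_0 = P_0$ to reshape $\mathcal{D}$ into the form of $\mathcal{C}$. Summing over all admissible $r$ then gives $\mathcal{D}(k,m) = \mathcal{C}(k,m)$ for every $k$ and $m$.

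With this identification, the first two displayed identities of the Corollary are precisely the two displayed identities of Proposition \ref{second-fact}. For the last identity I would specialize to $k = 0$, $m = n$: here no block is $AB$, so $\mathcal{D}(0,n) = B(TB)^{n-1} = (BT)^{n-1} B$, and \eqref{moments.1} of Proposition \ref{first-fact} yields $P_0 (BT)^{n-1} B P_0 = \frac{(-1)^{n-1}}{n} P_0$, which is exactly the claim.

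I do not expect a genuine obstacle, as the statement is bookkeeping resting on the two preceding propositions; the real work has already been done in the residue computations behind Propositions \ref{first-fact} and \ref{second-fact}. The only points that need a little care are the boundary cases: one should keep in mind that the pair $k = 0$, $m = n$ is not covered by the vanishing statement of the first line but is the separate conclusion recorded in the third line, and that this third line genuinely invokes Proposition \ref{first-fact} (through \eqref{moments.1}), not merely Proposition \ref{second-fact}.
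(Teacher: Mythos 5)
Your proof is correct and matches the paper's intent: the paper gives no written argument beyond the remark that the Corollary follows from Proposition \ref{second-fact} via $P_0A=AP_0=P_0$, and your verification --- that $\mathcal{D}(k,m)$ and $\mathcal{C}(k,m)$ are the same operator word $b_1Tb_2T\cdots Tb_m$ regrouped by associativity, so the first two lines are exactly Proposition \ref{second-fact}, while the third line is \eqref{moments.1} of Proposition \ref{first-fact} applied to $\mathcal{D}(0,n)=(BT)^{n-1}B$ --- is precisely the intended bookkeeping. You are also right on the one delicate point: the case $k=0$, $m=n$ must be excluded from the vanishing statement and rests on Proposition \ref{first-fact} rather than Proposition \ref{second-fact}.
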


\begin{proposition}\label{eigenvector}
	\begin{align} 
	&P_0B^kP_0=0, \ k=1,...,n-1 \label{Powers_of_B.1} \\ &P_0B^nP_0=P_0 \label{powers_of_B} \
	\end{align}

\end{proposition}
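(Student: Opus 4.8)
The plan is to derive the identities \eqref{Powers_of_B.1}--\eqref{powers_of_B} from Proposition \ref{second-fact} and Corollary \ref{D} by exploiting the identity $P_0A = AP_0 = P_0$, which lets us "collapse" all the $A$'s inside the operators $C_m(r_1,\dots,r_k)$ and $D_m(r_1,\dots,r_k)$ once they are sandwiched between copies of $P_0$. The key observation is that in $P_0 \mathcal{C}(k,m) P_0$ and more generally in products of the form $P_0 B T B T \cdots B P_0$ with some factors $B$ replaced by $AB$, inserting $P_0$ at strategic spots reduces everything to powers $P_0 B^j P_0$. So first I would set up an inductive scheme: assume \eqref{Powers_of_B.1} holds for all exponents $\le k-1$ and prove it for $k$ (with base case $P_0 B P_0 = 0$, which is \eqref{first_moment1}, i.e. the $k=1$ case of \eqref{moments.1.1}).

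Next, the mechanism for the induction: take the relation $P_0 \mathcal{C}(k,m)P_0 = 0$ (or the $\mathcal{D}$-version) for a suitable choice of $k$ and $m$ and expand it. Each summand $C_m(r_1,\dots,r_k)$ is an alternating word in $B$, $AB$, and $T$; sandwiched between $P_0$ on both ends, I would insert the resolution coming from the spectral decomposition — more precisely, I would use that $T$ is supported on the orthogonal complement of $\mathrm{ran}\,P_0$, so $P_0 T = T P_0 = 0$, hence $P_0 \cdot (\text{word starting or ending with } T) = 0$, and that any occurrence of $A$ adjacent to a $P_0$ satisfies $P_0 A = P_0$. The cleanest route is probably to isolate, inside $\mathcal{C}(n,n)$ or $\mathcal{D}(k,n)$, the single term in which \emph{every} factor that could be $T$ is instead forced to contribute trivially, leaving a pure power $P_0 B^n P_0$, and to show all other terms already vanish by the inductive hypothesis together with the moment relations \eqref{moments.1.1}--\eqref{moments.2}. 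Concretely, I expect \eqref{powers_of_B} to follow by combining \eqref{second-fact.2} (the $P_0\mathcal{C}(n,n)P_0 = \tfrac1n P_0$ identity) with the vanishing \eqref{Powers_of_B.1} at lower orders, after re-expanding $\mathcal{C}(n,n)$ in terms of the $B^j$'s: the $T$-free rearrangement contributes $\tfrac1n P_0 B^n P_0$ while a counting argument over the $2^n$ choices of which factors carry an $A$ supplies the normalizing constant so that $\tfrac1n P_0 B^n P_0 = \tfrac1n P_0$.

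I would organize the bookkeeping by introducing, for $0 \le \ell$, the operators $P_0 (BT)^{a_1} B (BT)^{a_2} B \cdots$ and reducing them via \eqref{moments.1.1}--\eqref{moments.1}: note $P_0(BT)^j B P_0 = 0$ for $j \le n-2$ and $= \tfrac{(-1)^{n-1}}{n} P_0$ for $j = n-1$, with the analogous statements for the $ABT$ strings; these are exactly the pieces produced when one expands $P_0 B^k P_0$ by inserting $I = P_0 + (I-P_0)$ between consecutive $B$'s and then recognizing $(I-P_0)$-blocks as powers of $T$ up to corrections. The main obstacle I anticipate is the combinatorial/normalization step: tracking precisely which words survive after all the $P_0 T = 0$ and $P_0 A = P_0$ collapses, and checking that the surviving coefficients assemble to exactly $1$ (not merely a nonzero constant) in \eqref{powers_of_B}. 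This is the same flavor of residue-and-counting argument already used in Proposition \ref{first-fact}, so I would model the write-up on that proof, emphasizing the inductive structure and relegating the coefficient count to a direct check.
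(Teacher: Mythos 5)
Your overall strategy coincides with the paper's: substitute for the identity between consecutive $B$'s using the spectral identity $AT=TA=I-P_0+T$ (equivalently $B=TAB+P_0B-TB$), kill everything adjacent to $P_0$ that begins or ends with $T$, and reduce $P_0B^kP_0$ to the operators of Proposition \ref{second-fact} and Corollary \ref{D}. For \eqref{Powers_of_B.1} your outline is essentially what the paper does: after organizing the expansion by the last occurrence of $P_0B$, everything reduces to $P_0B(TAB-TB)^{j}P_0=\sum_{s=0}^{j}(-1)^{j-s}P_0{\mathcal D}(s,j)P_0$, which vanishes for $j<n$ by Corollary \ref{D}, and the induction you set up closes.

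For \eqref{powers_of_B}, however, the mechanism you describe would not assemble. First, ${\mathcal C}(n,n)$ is not a sum over $2^n$ choices: with $k=m=n$ there is exactly one admissible index set, so ${\mathcal C}(n,n)=(ABT)^{n-1}AB$ is a single word with a $T$ in every interior slot, and there is no ``$T$-free rearrangement'' of it producing $\tfrac1n B^n$. The argument must run in the opposite direction: expand $P_0B^nP_0=P_0B(TAB-TB)^{n-1}P_0=\sum_{k=0}^{n-1}(-1)^{n-1-k}P_0\tilde{{\mathcal C}}(k,n)P_0$, where $\tilde{{\mathcal C}}(k,m)$ is the part of ${\mathcal C}(k,m)$ with $r_1\geq 2$. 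To evaluate each summand you need the \emph{whole family} of relations $P_0{\mathcal C}(k,n)P_0=0$ for $k<n$, not just \eqref{second-fact.2}: splitting ${\mathcal C}(k,n)$ according to whether $r_1=1$ and using $P_0A=P_0$ yields the recursion $P_0\tilde{{\mathcal C}}(k-1,n)P_0=-P_0\tilde{{\mathcal C}}(k,n)P_0$, which together with $P_0\tilde{{\mathcal C}}(n-1,n)P_0=\tfrac1nP_0$ (coming from \eqref{second-fact.2}) forces $P_0\tilde{{\mathcal C}}(k,n)P_0=\tfrac{(-1)^{n-k-1}}{n}P_0$ for every $k$. The alternating signs then cancel and the sum equals $n\cdot\tfrac1nP_0=P_0$. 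So the constant $1$ is not obtained by counting surviving words but by a telescoping determination of each $\tilde{{\mathcal C}}$-term; without invoking the lower-order relations for $m=n$, $k<n$, your normalization step fails.
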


\begin{proof} 

Observe that
\begin{align}
&AT=TA=\int_{\sigma(A)\setminus \{ 1\} } \frac{z}{z-1}dE(z)=\int_{\sigma(A)\setminus \{1\}}dE(z)+\int_{\sigma(A)\setminus \{1\}} \frac{dE(z)}{z-1} \nonumber \\
&= I-P_0+T. \label{A_and_T}
\end{align}
Therefore, 
\begin{equation}\label{TAB}
TAB=B-P_0B+TB, \end{equation}
so that
\begin{equation}\label{B_through_A_and_T}
B=TAB+P_0B-TB. 
\end{equation}

Thus,
$$P_0B^kP_0=P_0 (TAB+P_0B-TB)^kP_0. $$
Since $P_0T=0 \ \mbox{and} \ P_0^2=P_0$, we have
$$P_0B^kP_0=P_0 B(TAB+P_0B-TB)^{k-1}P_0.$$
Making all multiplications we obtain
$$P_0B^kP_0=P_0B\left (\sum  \prod_{j=1}^{k-1} S_j\right )P_0 ,$$
where each $S_j$ is either $(TAB-TB$), or $P_0B$. We now rewrite this sum according to the last position where $S_j=P_0B$. Since $P_0BP_0=0$, we have

\begin{align*} 
&P_0B^kP_0= P_0B  \left \{ (TAB-TB)^{k-1} \right. \\
 &+\left.\left [\sum_{j=0}^{k-2} \left ( \sum  \prod_{t=1}^{k-1-j}S_t\right )  P_0B(TAB-TB)^j\right ] \right \}P_0,
\end{align*}
where each $S_t$ in the right hand side is, as above, either $(TAB-TB)$ or $P_0B$ and there are no two $P_0B$ next to each other. Thus,
\begin{align*}
&P_0B^kP_0=P_0B(TAB-TB)^{k-1}P_0 \\ &+P_0B\left [\sum_{j=0}^{k-2} \left ( \sum  \prod_{t=1}^{k-1-j}S_t\right )  P_0B(TAB-TB)^j\right ] P_0,
\end{align*}
and here $S_1=(TAB-TB)$.

Note that by Corollary \ref{D}
$$P_0B(TAB-TB)^jP_0= \sum_{s=0}^j (-1)^{j-s}P_0{\mathcal D}(s,j)P_0=0 \ \mbox{for} \ j<n.$$
which proves \eqref{Powers_of_B.1}. To establish \eqref{powers_of_B}  we remark that  $P_0A=AP_0=P_0$, so that

\begin{align}
&P_0{\mathcal C}(k,m)P_0=P_0BT\tilde{{\mathcal C}}(k-1,m)P_0+P_0\tilde{{\mathcal C}}(k,m)P_0, \ 2\leq k\leq  m-1 \label{with_A.1} \\
&P_0{\mathcal C}(1,m)P_0=P_0(BT)^{m-1}BP_0+P_0\tilde{{\mathcal C}}(1,m)P_0, \label{with_A.2} \\
&P_0{\mathcal C}(m,m)P_0=P_0\tilde{{\mathcal C}}(m-1,m)P_0, \label{with_A.3}
\end{align}
where
$$\tilde{{\mathcal C}}(k,m)=\sum_{2\leq r_1<r_2<...<r_k\leq m } C_m(r_1,...,r_k). $$
 Relations \eqref{second-fact.1} and \eqref{with_A.1} imply
 \begin{equation} \label{different_A}
 P_0\tilde{{\mathcal C}}(k-1,m)P_0=-P_0\tilde{{\mathcal C}}(k,m)P_0,	 \ 2\leq k\leq  n-1. 
 \end{equation}

Since \eqref{second-fact.2} and \eqref{with_A.3}	imply
\begin{equation}\label{n-th degree}
	P_0\tilde{{\mathcal C}}(n-1,n)P_0=\frac{1}{n}P_0, 
\end{equation}
we obtain
\begin{equation}\label{alternation}
P_0\tilde{{\mathcal C}}	(k,n)P_0= \frac{(-1)^{n-k-1}}{n}P_0.
\end{equation}

We now have by what was proved above

$$ P_0B^nP_0=P_0B(TAB-TB)^{n-1}P_0=\sum_{k=0}^{n-1} (-1)^{n-k-1}P_0\tilde{{\mathcal C}}(k,n)P_0=P_0.$$

 We are done.
\end{proof}

\vspace{.1cm}

\begin{corollary}\label{orthogonal.1}
 Let $e_0$ be a unit eigenvector of $A$ with eigenvalue one. Then
 \begin{equation}\label{invariant.1}
 \langle  B^ne_0,e_0\rangle =1.	
 \end{equation}	
\end{corollary}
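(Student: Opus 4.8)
The plan is to obtain this statement directly from Proposition~\ref{eigenvector}, specifically the operator identity \eqref{powers_of_B}, namely $P_0B^nP_0=P_0$, where $P_0$ denotes the orthogonal projection onto the eigenspace of $A$ corresponding to the eigenvalue one. The only observation needed to pass from the operator identity to the scalar identity \eqref{invariant.1} is that, since $e_0$ is a unit eigenvector of $A$ with $Ae_0=e_0$, the vector $e_0$ lies in the range of $P_0$, hence $P_0e_0=e_0$.

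First I would rewrite the inner product using $P_0e_0=e_0$:
$$\langle B^ne_0,e_0\rangle=\langle B^nP_0e_0,P_0e_0\rangle.$$
Then, since $P_0$ is an orthogonal (hence self-adjoint and idempotent) projection, I would transfer one copy of $P_0$ across the inner product,
$$\langle B^nP_0e_0,P_0e_0\rangle=\langle P_0B^nP_0e_0,e_0\rangle,$$
and apply \eqref{powers_of_B} to get $\langle P_0B^nP_0e_0,e_0\rangle=\langle P_0e_0,e_0\rangle=\langle e_0,e_0\rangle=1$, which is exactly \eqref{invariant.1}.

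I do not expect any genuine obstacle: the corollary is merely the vectorial form of the already-established relation $P_0B^nP_0=P_0$. The single point worth spelling out is that the hypothesis "$e_0$ is an eigenvector of $A$ with eigenvalue one" is precisely what guarantees $P_0e_0=e_0$, so that inserting the projections $P_0$ around $B^n$ changes nothing.
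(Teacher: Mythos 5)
Your proof is correct and is exactly the argument the paper intends: the paper dismisses the corollary with ``follows immediately from \eqref{powers_of_B}'', and your computation $\langle B^ne_0,e_0\rangle=\langle P_0B^nP_0e_0,e_0\rangle=\langle P_0e_0,e_0\rangle=1$, using that $P_0$ is the self-adjoint projection onto the $1$-eigenspace of the normal operator $A$ so that $P_0e_0=e_0$, is just the spelled-out version of that remark.
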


The result follows immediately from \eqref{powers_of_B}

\vspace{.2cm}

We are ready to finish the proof of Theorem \ref{Theorem1}. 

Since $\parallel B\parallel =1$, Proposition \ref{eigenvector} shows that the eigenvector of $A$ with eigenvalue one is an eigenvector of $B^n$ with the same eigenvalue one. 

It is easy to see that for every $1\leq m\leq n-1$ the joint spectrum $\sigma_p((e^{2\pi im/n}A),B,(e^{2\pi im/n}A)B)$ contains the same algebraic surface $\{x^n+y^n+(-1)^{n-1}z^n=1\}$. Indeed, 
\begin{align*}
(x,y,z)\in \sigma_p((e^{2\pi im/n}A),B,(e^{2\pi im/n}A)B) \\ \Longleftrightarrow  (e^{2\pi im/n}x,y,e^{2\pi im/n}z)\in \sigma_p(A,B,AB).
\end{align*} 
Application of the above argument shows that
the one-eigenvector on $e^{2\pi im/n}A$ is a one-eigenvector of $B^n$. Thus, $B^n$ turns into identity on the $n$-dimensional subspace $L$ spanned by eigenvectors of $A$ with eigenvalues $n$-th roots of unity. 

Since the component $\{x^n+y^n+(-1)^{n-1}z^n=1\}$ has multiplicity one, and since for every $m=0,...,n-1$ 
\begin{align*}\sigma_p(A,B,AB)\cap \Delta_3(\tau_{2m},\epsilon)= \{x^n+y^n+(-1)^{n-1}z^n=1\} \cap \Delta_3(\tau_{2m}, \epsilon), \end{align*}
every $n$-th root of unity is an isolated spectral point of $B$ of multiplicity one. Let $\gamma_j, \ j=0,...,n-1$ be a circle in  $\C$ centered at $e^{2\pi ij/n}, \ j=0,...,n-1$, that  does not contain other spectral points of $B$, and 
$$ P=\frac{1}{2\pi i}\int_{\gamma_0+\gamma_2+...+\gamma_{n-1}} (w-B)^{-1}dw.$$
The range of the projection $P$ is invariant under $B$, has dimension $n$,  and, the spectral mapping theorem  (cf \cite{La}) implies that it contains all 1-eigennvectors of $B^n$. Thus, $L=Range (P)$ is invariant under the action of both $A$ and $B$. This finishes the proof of the first part of Theorem \ref{Theorem1}.

To prove that the restriction of $B$ to the subspace $L$ is unitary, we observe that, since no power of a Jordan cell of dimension greater than one is diagonal, $B$ is diagonalizable on $L$, each eigenvalue of the restriction $\left.B\right |_L$ is an $n$-th root of unity, and multiplicity of each of them is one. Now, the result follows from a simple and well-known fact: if $\alpha\neq \beta$ (mod $2\pi$), and both $e^{i\alpha}$ and $e^{i\beta}$ are eigenvalues of an operator $B$ of norm one, then every pair $\zeta, \eta$ which are respectively $e^{i\alpha}$- and $e^{i\beta}$-eigenvectors of $B$, are orthogonal. Indeed, if $\langle \zeta,\eta \rangle \neq 0$, let
$\tau=arg(\langle \zeta,\eta \rangle)$. Set $\nu=(\alpha -\beta +\tau)$. WLOG assume $\parallel \zeta\parallel =\parallel \eta \parallel=1$, then
$$\parallel \zeta + e^{i \nu}\eta\parallel ^2=2+2Re (e^{- i\nu}\langle \zeta,\eta \rangle)=2+2|\langle \zeta,\eta \rangle |Re (e^{i(\beta-\alpha)}),$$
while
$$\parallel B(\zeta+e^{i \nu}\eta)\parallel^2=2+2|\langle \zeta, \eta \rangle|>\parallel \zeta + e^{i \nu}\eta\parallel ^2, $$
a contradiction. The proof is finished.

\section{Fourier pairs: proof of Theorems \ref{Theorem2} and \ref{Theorem3}}\label{thm2_and_thm3}

\begin{proof}  
 Statement 1) of Theorem \ref{Theorem2} follows directly from Theorem \ref{Theorem1}. 

To prove 2) let
$$e_0=c_{00}\zeta_0+...+c_{0n-1}\zeta_{n-1}. $$
To make our notation simpler write $\omega=e^{2\pi i/n}$. Then
$$B^me_0=\sum_{j=0}^{n-1}c_{0j}\omega^{mj}\zeta_j, \ m=1,...,n-1,$$
and, therefore,
$$ \langle B^me_0,e_0\rangle = \sum_{j=0}^{n-1} |c_{0j}|^2 \omega^j,$$

Since 
$\langle e_0,e_0\rangle=1, $ we obviously have $\sum_{j=0}^{n-1} |c_{0j}|^2=1$.
Now, it follows from  \eqref{Powers_of_B.1} that $|c_{00}|^2,...|c_{0n-1}|^2$ satisfy the following system of linear equations
$$ \begin{array}{ccccccccc} |c_{00}|^2& + & |c_{01}|^2 & + & ... & + & |c_{0n-1}|^2 & = & 1 \\ 
 |c_{00}|^2&+& \omega |c_{01}|^2 & + &... &+ & \omega ^{n-1}|c_{0n-1}|^2 & = & 0 \\
 . & . &. & . & . & . &. & . & . \\	|c_{00}|^2& + &\omega ^{n-1}|c_{01}|^2 & + & ... & + & \omega ^{(n-1)^2}|c_{0n-1}|^2 & = & 0 .
 \end{array}
$$
Clearly, $|c_{00}|^2=|c_{01}|^2=...=|c_{0n-1}|^2=\frac{1}{n}$ satisfy this system, and, since the determinant of the system is not zero, this is the only solution.

Applying the same argument to $\sigma_p(e^{2\pi mi/n}A,B,e^{2\pi mi/n}AB), \ m=1,2,...n-1$, we obtain that for every $1\leq r\leq n-1$ the coefficients of the decomposition of $e_r$ in the basis $\zeta_0,...,\zeta_{n-1}$ also have the same absolute value $1/\sqrt{n}$:
$$e_r=c_{r0}\zeta_0+...+c_{rn-1}\zeta_{n-1}, \ |c_{r0}|=...=|c_{rn-1}|=\frac{1}{\sqrt{n}} . $$ 

\noindent Since $e_0,...,e_{n-1}$ are orthogonal, the matrix $C$ with columns $\left ( \begin{array}{c} c_{r0} \\ . \\ .\\ . \\ c_{rn-1} \end{array} \right ),$ is unitary, and, therefore, $\sqrt{n}C$ is a complex Hadamard matrix. \end{proof}

We now turn to Theorem \ref{Theorem3}.

\begin{proof}
a) As it was mention earlier, all complex Hadamard matrices of order 3 are similar to the Fourier matrix of order 3. We could use this fact to prove statement a),	 but here is an alternative independent proof that uses our operator relations obtained in the previous section.

Let $e_0,e_1, \ \mbox{and} \ e_2$ be the eigenbasis for $A$, \ $Ae_j=e^{2\pi ji/3} e_j, \ j=0,1,2$.
Relations \eqref{moments.1} and \eqref{moments.2} for $P_0BTBP_0$ and $P_0ABTABP_0$ give
\begin{align}
&\frac{\langle Be_0,e_1\rangle \langle Be_1, e_0\rangle}{e^{2\pi i/3}-1}+\frac{\langle Be_0, e_2\rangle \langle Be_2,e_0\rangle}{e^{4\pi i/3} -1}=0 \\
&\frac{e^{2\pi i/3}}{e^{2\pi i/3}-1}\langle Be_0, e_1\rangle \langle Be_1, e_0\rangle + \frac{e^{4\pi i/3}}{e^{4\pi i /3}-1}\langle Be_0,e_2\rangle \langle Be_2,e_0\rangle =0.
\end{align}
Considering these equations as a system in $\langle Be_0, e_1\rangle \langle Be_1, e_0\rangle $ and  \newline $\langle Be_0,e_2\rangle \langle Be_2,e_0\rangle$ we see that
\begin{align}
\langle Be_0, e_1\rangle \langle Be_1, e_0\rangle=0 \label{system_1} \\
\langle Be_0,e_2\rangle \langle Be_2,e_0\rangle =0.	\label{system_2}
\end{align}

In  a similar way we can get relations analogous to \eqref{system_1}-\eqref{system_2}:
\begin{align}
\langle Be_1, e_0\rangle \langle Be_0,e_1\rangle =0 \label{system_3}\\
\langle Be_1,e_2\rangle \langle Be_2,e_1\rangle =0 \label{system_4}\\
\langle Be_2,e_0\rangle \langle Be_0,e_2\rangle =0 \label{system_5} \\
\langle Be_2, e_1\rangle \langle Be_1,e_2\rangle =0 .	
\end{align}

Equation \eqref{system_1} gives two scenarios:

\noindent 1). $\langle Be_0,e_1\rangle =0$. Then $P_0BP_0=0$ implies
$$Be_0=e^{i\theta_0}e_2. $$
Hence, \eqref{system_2} and $P_2BP_2=0 $ give
\begin{equation}Be_2=e^{i\theta_2}e_1. \label{Be_3}
\end{equation}

Now, $P_1e_1P_1=0$, \eqref{system_4}, and \eqref{Be_3} yield
\begin{equation}
Be_1=e^{i\theta_1}e_0.	
\end{equation}
The condition $B^3=1$ implies $\theta_0+\theta_1+\theta_2=0 \ (mod  \ 2\pi)$ and, therefore,
\begin{align*}&B=\left [ \begin{array}{ccc} 0 & e^{i\theta_1} & 0 \\ 0 & 0 & e^{i\theta_2} \\ e^{i\theta_0} & 0 & 0\end{array} \right ] \\ &= \left [ \begin{array}{ccc} e^{-i\theta_0} & 0 & 0 \\ 0 & e^{i\theta_2} & 0 \\ 0 & 0 & 1 \end{array} \right ] \left [ \begin{array}{ccc} 0 & 1 & 0\\ 0 & 0 & 1 \\ 1 & 0 & 0 \end{array} \right ] \left [ \begin{array}{ccc} e^{i\theta_0} &0 &0 \\ 0 & e^{-i\theta_2} & 0 \\ 0 & 0 & 1 \end{array} \right ].\end{align*}
 Since $A$ commutes with  every matrix which is diagonal in the basis $e_0,e_1,e_2$, the pair $(A,B)$. is unitary equivalent to

$$\left [ \begin{array}{ccc} 1 & 0 & 0 \\ 0 & e^{2\pi i/3} & 0 \\ 0 & 0 & e^{4\pi i /3} \end{array} \right ], \left  [ \begin{array}{ccc}0 & 1 & 0 \\ 0 & 0 & 1 \\ 1 & 0 & 0 \end{array} \right ] . $$
 \vspace{.2cm}
 
 \noindent 2). $\langle Be_1,e_0\rangle =0$.
 
 In this case a similar consideration leads to
 $$Be_0=e^{i\alpha_0}e_1, \ Be_1=e^{i\alpha_1}e_2, \ Be_2=e^{i\alpha_2}e_0,$$
 and, hence,  the pair $(A,B)$ is unitary equivalent to the pair
 $$ \left [ \begin{array}{ccc} 1 & 0 & 0 \\ 0 & e^{2\pi i/3} & 0 \\ 0 & 0 & e^{4\pi i /3} \end{array} \right ] ,  \ \left [ \begin{array}{ccc} 0 & 0 & 1 \\ 1 & 0 & 0\\ 0 & 1 & 0 \end{array} \right ], $$
 which finishes the proof of a).
 \vspace{.2cm}

Before passing to the proof of b)  we prove the following lemma. 

Let $G_n$ be a subgroup of the permutation  group ${\mathcal P}_n$ of all permutations of $(0,1,...,n-1)$, defined by
\begin{align*} 
G_n= \{ P\in {\mathcal P}_n:  
\exists \ 1\leq q\leq n-1 \ \mbox{and} \ 0\leq m\leq n-1  \mbox{ such that $q$ and $n$} \\ \mbox{are mutually prime and} \ \forall \ 0\leq j\leq n-1, \  
P(j)=(qj+m ) \ (mod \ n)\}. 
\end{align*}

\begin{lemma}\label{lemma}
Let  $P_1$ and $P_2$ be two permutation matrices of order $n$. Write
$$ B(P_1,P_2)=\frac{1}{n}P_2^*F_n^*P_1^*AP_1F_nP_2,$$
where, as before, $A$ is the diagonal matrix with roots of unity in the increasing order of arguments on the diagonal. 	If either $P_1\in G_n$ or $P_2\in G_n$, then 
$$\sigma_p(A,B(P_1,P_2),AB(P_1,P_2))=\{x^n+y^n+(-1)^{n-1}z^n=1\} $$
and
\begin{itemize}
\item[a)] If $P_1\in G_n$ the there exists a permutation $P\in {\mathcal P}_n$ such that the pair $(A,B(P_1,P_2))$ is unitary equivalent	to $(P^*AP,\widehat{B}_n)$ ( matrix $\widehat{B}_n$ was defined by \eqref{B_hats}).
\item[b)] If $P_2\in G_n$, there exists a permutation $P\in {\mathcal P}_n$ such that the pair $(A,B(P_1,P_2))$ is unitary equivalent to $(\widehat{B}_n,P^*AP)$.
\end{itemize}

\end{lemma}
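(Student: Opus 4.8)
The plan is to put $B(P_1,P_2)$ into a monomial normal form — a permutation matrix composed with a diagonal unitary — by exploiting the way the Fourier matrix $F_n$ conjugates the affine permutations making up $G_n$, and then to remove the diagonal unitary and rectify the permutation by two further conjugations. Throughout write $\omega=e^{2\pi i/n}$, and for $\gcd(q,n)=1$ let $S_q$ be the cyclic shift $S_qe_k=e_{k+q\,(\mathrm{mod}\,n)}$, so $\widehat{B}_n=S_1$ and $\widehat{\widehat{B}}_n=S_{-1}$. A one–line computation with $F_ne_k=\sum_j\omega^{jk}e_j$ gives the two identities we need: $\tfrac1nF_n^*\,\mathrm{diag}(\omega^{qj})_j\,F_n=S_q$, and, for the affine permutation matrix $P$ with $Pe_j=e_{qj+m}$, $\tfrac1nF_nPF_n^*e_k=\omega^{q^{-1}mk}e_{q^{-1}k}$, where $q^{-1}$ is the inverse of $q$ modulo $n$. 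Since $\sigma_p$ is invariant under unitary equivalence, once the pair is reduced to $(\widehat{B}_n,D)$ or $(D,\widehat{B}_n)$ with $D$ diagonal carrying all the $n$-th roots of unity, the spectral claim follows from a direct determinant computation: the pencil $xX+yY+zXY-I$ for such a pair is ``diagonal plus one $n$-cycle'', so only the identity permutation and that single $n$-cycle (of sign $(-1)^{n-1}$) contribute to $\det$, and using $\prod_k(xa_k-1)=(-1)^{n-1}(x^n-1)$ and $\prod_k(y+za_k)=y^n+(-1)^{n-1}z^n$ (valid because the $a_k$ run over all $n$-th roots of unity) one gets $\det=(-1)^{n-1}\bigl(x^n+y^n+(-1)^{n-1}z^n-1\bigr)$, a nonzero multiple of the Fourier polynomial for every parity of $n$.

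For a), assume $P_1\in G_n$, say $P_1e_j=e_{qj+m}$. Then $P_1^*AP_1=\mathrm{diag}(\omega^{qj+m})_j=\omega^m\,\mathrm{diag}(\omega^{qj})_j$, so the first DFT identity gives $B(P_1,P_2)=\omega^mP_2^*S_qP_2$. Conjugating the pair $(A,B(P_1,P_2))$ by $P_2$ turns it into $(P_2AP_2^*,\omega^mS_q)$; since $\gcd(q,n)=1$, $S_q$ is an $n$-cycle, hence conjugate to $\widehat{B}_n$ by some permutation matrix $R$, and conjugating by $R$ produces $\bigl((P_2^*R)^*A(P_2^*R),\ \omega^m\widehat{B}_n\bigr)=(P^*AP,\omega^m\widehat{B}_n)$ with $P=P_2^*R\in{\mathcal P}_n$; finally conjugating by $\Lambda=\mathrm{diag}(\omega^{mk})_k$ fixes the diagonal first coordinate and, since $\Lambda^*\widehat{B}_n\Lambda e_k=\omega^{mk}\omega^{-m(k+1)}e_{k+1}=\omega^{-m}e_{k+1}$, sends $\omega^m\widehat{B}_n$ to $\widehat{B}_n$. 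The composite unitary being a product of permutation and diagonal unitaries, $(A,B(P_1,P_2))$ is unitarily equivalent to $(P^*AP,\widehat{B}_n)$, and the spectral assertion follows from the determinant computation above.

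For b), assume $P_2\in G_n$. Conjugating $(A,B(P_1,P_2))=(A,P_2^*CP_2)$, with $C=\tfrac1nF_n^*(P_1^*AP_1)F_n$, by $P_2$ gives $(P_2AP_2^*,C)$. Now conjugate by the unitary $V=\tfrac1{\sqrt n}F_n$ and set $\phi(X)=VXV^*=\tfrac1nF_nXF_n^*$, a $*$-homomorphism. Because $F_nF_n^*=nI$ we get $\phi(C)=D:=P_1^*AP_1$, a diagonal matrix carrying all the $n$-th roots of unity; also $\phi(A)=\widehat{\widehat{B}}_n=S_{-1}$; and, since $P_2\in G_n$, the second DFT identity gives $\phi(P_2)=\Lambda_0Q_0$ with $Q_0e_k=e_{q^{-1}k}$ and $\Lambda_0=\mathrm{diag}(\omega^{mk})_k$, so $\phi(P_2AP_2^*)=\Lambda_0Q_0S_{-1}Q_0^*\Lambda_0^*=\Lambda_0S_{-q^{-1}}\Lambda_0^*$. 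Thus the pair becomes $(\Lambda_0S_{-q^{-1}}\Lambda_0^*,D)$; conjugating by $\Lambda_0$ (which commutes with $D$) yields $(S_{-q^{-1}},D)$, and since $\gcd(q^{-1},n)=1$ there is a permutation matrix $R$ with $R^*S_{-q^{-1}}R=\widehat{B}_n$, so conjugating by $R$ gives $(\widehat{B}_n,(P_1R)^*A(P_1R))=(\widehat{B}_n,P^*AP)$ with $P=P_1R\in{\mathcal P}_n$. Again the spectral claim follows from the determinant computation.

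I expect the main obstacle to be the reduction to monomial form: establishing and exploiting the fact that the Fourier conjugate of an affine permutation of $\mathbb Z_n$ is again a monomial matrix — this is precisely where the hypothesis $P_i\in G_n$ is used, and it fails for a general permutation — together with bookkeeping which permutation $P$ ends up in the normal form. By contrast, the determinant computation is routine once one notices that only two permutations contribute to the expansion of the relevant pencil; the only mild care required there is to check that the factor $(-1)^{n-1}$ and the signs come out uniformly for $n$ even and odd.
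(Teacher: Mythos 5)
Your proof is correct and follows essentially the same route as the paper's: both arguments reduce $B(P_1,P_2)$ to a monomial (diagonal-times-permutation) matrix using the character-sum orthogonality packaged in your two DFT conjugation identities — in case a) because $F_n$ turns the modulation $\mathrm{diag}(\omega^{qj})$ into the shift $S_q$, in case b) by conjugating the whole pair by $F_n$ — and then strip off the diagonal unitary and normalize the resulting $n$-cycle by a permutation, exactly as in the paper. The only material difference is that you carry out explicitly the closing determinant evaluation (identity permutation plus the single $n$-cycle), which the paper leaves as a direct check; every step verifies, the sole typo being that your $\Lambda_0$ in case b) should be $\mathrm{diag}(\omega^{q^{-1}mk})_k$ rather than $\mathrm{diag}(\omega^{mk})_k$, which is immaterial since only diagonality of $\Lambda_0$ is used.
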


\begin{proof}
a). Suppose that $P_1\in G_n$, \ $P_1(j)=qj+m, \ j=0,...,n-1$, and $q$ and $n$ are mutually prime. If $B(P_1,P_2)=[b_{kl}]_{k,l=0}^{n-1}$, then (here again $\omega=e^{2\pi i/n}$ is the prime $n$-th root of unity)
\begin{align}b_{kl}=&\frac{1}{n}\sum_{j=0}^{n-1} \omega^{P_1(j)[P_2(l)-P_2(k)]+j} = \frac{1}{n}\sum_{j=0}^{n-1} \omega^{(qj+m)[P_2(l)-P_2(k)]+j}. \nonumber \\ &=\frac{1}{n}\omega^{m[P_2(l)-P_2(k)]}\sum_{j=0}^{n-1}\omega^{j[\tilde{P}_2(l)-\tilde{P}_2(k)+1] } \label{roots}
\end{align}
where $\tilde{P}_2(s)=qP_2(s) \ (mod \ n)$. Since $q$ and $n$ are mutually prime, $\tilde{P}_2$ is a permutation of $0,1,...,n-1$. Furthermore, unless $\tilde{P}_2(l)-\tilde{P}_2(k)+1=0 \ (mod \ n)$, the last sum in \eqref{roots} is equal to zero. Indeed, if $(\tilde{P}_2(l)-\tilde{P}_2(k)+1)$ is mutually prime with n, then $\omega^{j[\tilde{P}_2(l)-\tilde{P}_2(k)+1] }$ runs over all $n$-th roots of unity as $j$ runs from $0$ to $n-1$. If $(\tilde{P}_2(l)-\tilde{P}_2(k)+1)$ and $n$ have a non-trivial common divisor, let $s$ be their greatest common divisor, so that $(\tilde{P}_2(l)-\tilde{P}_2(k)+1)=sr, \ n=st,  $  with $r$ and $t$ being mutually prime. In this case as $j$ runs from $0$ to $n-1$, \  $\omega^{j[\tilde{P}_2(l)-\tilde{P}_2(k)+1]} $ runs $s$ times over the set of $t$-th roots of unity. In both cases the sum is zero. 

Thus, if $k$ is fixed, there is only one $l=l(k)$ that satisfies
$$\tilde{P}_2(k)=\tilde{P}_2(l(k))+1 \ (mod \ n).\, $$
which means that each row of $B(P_1,P_2)$ has only one non-zero entry, and this entry is equal to 
$$\omega^{m[P_2(l(k))-P_2(k)]}=\omega^{mq^{-1}[\tilde{P}_2(l(k))-\tilde{P}_2(k)]}=\omega^{-mq^{-1}},$$
where $q^{-1}$ is taken in the sense ${\mathbb Z}_n$ (since $q$ and $n$ are mutually prime, $q$ is invertible in ${\mathbb Z}_n$). Moreover, this  non-zero entry is the same $n$-th root of unity for each row. Fixing $l$ we obtain that each column of $B(P_1,P_2)$ also contains only one non-trivial element which is equal to the same $n$-th root of unity. 

Of course, it means that
$$ B(P_1,P_2)= \omega^{-mq^{-1}}I C(\tilde{P}_2)=\Lambda^*C(\tilde{P}_2)\Lambda,$$
where $C(\tilde{P}_2)$ is the matrix whose all non-trivial entries are ones, and an entry $c_{kl}$ is non-trivial if and only if
$$\tilde{P}_2(l)-\tilde{P}_2(k)=-1 \ (mod \ n), $$
 and $\Lambda$ is a diagonal matrix whose each diagonal entree is an $n$-th root of unity (it is very easy to prove that such $\Lambda $ exists). Of course,
$$C(\tilde{P}_2)= \tilde{P}_2^*\widehat{B}_n\tilde{P}_2 ,$$
and here we denoted by the same symbol $\tilde{P}_2$ the corresponding permutation matrix.   As a result we obtain 
 \begin{equation}\label{equivalence1}B(P_1,P_2)= \Lambda^*\tilde{P}_2^*\widehat{B}_n\tilde{P}_2\Lambda,\end{equation}
 and, so
 \begin{align*}
 &xA+yB(P_1,P_2)+zAB(P_1,P_2)-I \\
 &=xA+y\Lambda^*\tilde{P}_2^*\widehat{B}_n\tilde{P}_2\Lambda +zA\Lambda^*\tilde{P}_2^*\widehat{B}_n\tilde{P}_2\Lambda -I \\
 &=\Lambda^*\tilde{P}_2^* \left [ x\tilde{P}_2\Lambda A\Lambda^*\tilde{P}_2^*+y\widehat{B}+z\tilde{P}_2\Lambda A\Lambda^*\tilde{P}_2^*\widehat{B}_n-I \right ]\tilde{P}_2\Lambda.
 \end{align*} 
 Since both $A$ and $\Lambda$  are diagonal, they commute, and we obtain
 \begin{align}
 &xA+yB(P_1,P_2)+zAB(P_1,P_2)-I \nonumber \\ &=\Lambda^*\tilde{P}_2^* \left [ x\tilde{P}_2 A\tilde{P}_2^*+y\widehat{B}_n+z\tilde{P}_2 A\tilde{P}_2^*\widehat{B}_n-I \right ]\tilde{P}_2\Lambda. \label{eq.2}
 \end{align}
 Therefore, 
 \begin{equation}\label{eq.1}
 \sigma_p(A,B(P_1,P_2),AB(P_1,P_2))=\sigma_p( \tilde{P}_2 A\tilde{P}^*_2, \widehat{ B}_n,\tilde{P}_2 A\tilde{P}_2^*\widehat{B}_n).
 \end{equation}
 The matrix $\tilde{P}_2 A\tilde{P}_2^*$ is a diagonal matrix with $n$-th roots of unity on the diagonal permuted according to $\tilde{P}_2$, so one can directly check that
 $$\sigma_p( \tilde{P}_2 A\tilde{P}_2^*, \widehat{ B}_n,\tilde{P}_2 A\tilde{P}_2^*\widehat{B}_n)=\{x^n+y^n+(-1)^{n-1}z^n=1\}.$$ 
 It also follows from \eqref{equivalence1} that the pair $(A,B(P_1,P_2))$ is unitary equivalent to $(\tilde{P}_2 A\tilde{P}_2^*, \widehat{ B}_n)$.

b). Now suppose that $P_2\in G_n$, \ $P_2(j)=qj+m$, where $q$ and $n$ are mutually prime. In this case
 \begin{equation}\label{reduced_to_one}
 b_{kl}=\frac{1}{n}\sum_{j=0}^{n-1} \omega^{P_1(j)q(l-k)+j}=\frac{1}{n}\sum_{j=1}^{n-1} \omega^{\tilde{P}_1(j)(l-k)+j}, 
 \end{equation}
 where $\tilde{P}_1(j)=qP_1(j) \ (mod \ n)$. Again, since $q$ and $n$ are mutually prime, $\tilde{P}_1$ is a permutation in ${\mathcal P}_n$, and, hence, \eqref{reduced_to_one} shows that
 $$B(P_1,P_2)= B(\tilde{P}_1,I),$$
 where $I$ is the identity permutation. Thus,
$$B(P_1,P_2)=\frac{1}{n}F_n^*\tilde{P}_1^*A \tilde{P}_1F_n, $$ 
and 
\begin{align*}
&xA+yB(P_1,P_2)+zAB(P_1,P_2)-I \\
&=\frac{1}{\sqrt{n}}F_n^*\left (xF_nAF_n^*+y\tilde{P}_1^*A\tilde{P}_1+z(F_nAF_n^*)(\tilde{P}_1^*A\title{P}_1)-I\right )\frac{1}{\sqrt{n}}F_n \\
&=\frac{1}{\sqrt{n}}F_n^*\left ( x\widehat{\widehat{B}}+y\tilde{P}_1^*A\tilde{P}_1+z\widehat{\widehat{B}}\tilde{P}_1^*A\tilde{P}_1-I \right )\frac{1}{\sqrt{n}} F_n.
\end{align*}
This, of course, implies,
$$\sigma_p(A,B(P_1,P_2),AB(P_1,P_2))=\sigma_p(\widehat{\widehat{B}}, (\tilde{P}_1^*A\tilde{P}_1),\widehat{\widehat{B}}(\tilde{P}_1^*A\tilde{P}_1)), $$ 
and that the pair $(A,B)$ is unitary equivalent to $(\widehat{\widehat{B}},(\tilde{P}_1^*A\tilde{P}_1))$.  Now  statement b) of Lemma \ref{lemma} follows from \eqref{hats}.

Finally, again, $\tilde{P}_1^*A\tilde{P}_1$ is a diagonal matrix whose entries on the main diagonal are $n$-th roots of unity permuted according to $\tilde{P}_1$. It is now very easy to check that 
$$\sigma_p(\widehat{\widehat{B}},(\tilde{P}_1^*A\tilde{P}_1),\widehat{\widehat{B}}(\tilde{P}_1^*A\tilde{P}_1))=\{x^n+y^n+(-1)^{n-1}z^n=1\}. $$
Lemma \ref{lemma} is completely proved.
\end{proof}

Now we are able to finish the proof of Theorem \ref{Theorem3}, section b). 

Let $h$ be the complex Hadamard transition matrix  from Theorem \ref{Theorem2}, so that
$$B=\frac{1}{n}h^*Ah. $$ 
First we observe that if $\Lambda_1,\Lambda_2$ are diagonal matrices with unimodular entries on the main diagonal, then
$$\tilde{h}= \Lambda_1h\Lambda_2, \ \mbox{and} \ \tilde{B}=\frac{1}{n}\tilde{h}^*A\tilde{h}=\frac{1}{n}\Lambda_2^*h^*\Lambda_1^*A\Lambda_1h\Lambda_2=\frac{1}{n}\Lambda_2^*B\Lambda_2, $$
($A$ and $\Lambda$ commute since both of them are diagonal), so that
$$(A,\tilde{B})=(A,\Lambda_2^*B\Lambda_2)=\Lambda_2^*(A,B)\Lambda_2 ,$$
and we see that the pairs $(A,\tilde{B})$ and $(A,B)$ are unitary equivalent.  For this reason  in our consideration we may omit the diagonal matrices $\Lambda_1,\Lambda_2$ 
 in \eqref{hadamard}.
 
 Let $n=4 \ \mbox{or} \ 5$. If $h=P_1F_nP_2 $, where at least one of $P_1,P_2$ is in $G_n$ with corresponding $n$, then by Lemma \ref{lemma},  $\sigma_p(A,B,AB)=\{x^n+y^n+(-1)^{n-1}z^n=1\}$, and the pair $(A,B)$ is unitary equivalent to either $(P^*AP,\widehat{B})$ or $(\widehat{B}, P^*AP)$
with some $P\in {\mathcal P}_n$, so for these matrices the statement is established. We used SAGEMATH software to verify that in both cases $n=4,5$ for any other $h$ the joint spectrum 
 $\sigma_p(A,\frac{1}{n}h^*Ah,\frac{1}{n}Ah^*Ah)$ is different from the Fourier surface $\{x^n+y^n+(-1)^{n-1}z^n=1\}$. The corresponding simple algorithms are in section \ref{last}.  The first two algorithms show that for the case $n=4$, a Hadamard matrix must be similar to the Fourier matrix $F_4$, and also it must be in the form given by Lemma \ref{lemma},  otherwise the coeffiecent of $z^2$ is non-trivial.  The third algorithm verifies that if $n=5$, for all pairs of permutations, $P_1,P_2$, the  joint spectrum of $A,B,AB$ coincides with the Fourier surface \eqref{Fourier1}, $\sigma_p(A,B,AB) = \{x^5+y^5+z^5 = 1\}$, only when $P_1 \in G_5$ or $P_2 \in G_5$. 
 The proof of Theorem \ref{Theorem3} is finished.

\end{proof}

\section{Rigidity theorem for Fourier surfaces: proof of Theorem \ref{Theorem4}}\label{thm4}

\vspace{.2cm} 

\begin{proposition}\label{A_invariant}
 If $A$ and $B$ satisfy the conditions of Theorem \ref{Theorem4}, then
\begin{align}
&P_0B^rABP_0=0, \ r=1,...,n-2	\label{A_diagonalized.1} \\
&P_0B^{n-1}ABP_0=e^{2\pi i/n}P_0 \label{A_diagonalized.2}
\end{align}
	\end{proposition}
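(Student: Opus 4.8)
The plan is to apply the local residue analysis of Section~\ref{hypersurfaces} to the four‑operator tuple $(A,B,AB,BA)$, with $A$ playing the role of $A_1$, near the spectral point $\tilde\tau_{10}=(1,0,0,0)$, and then to convert the resulting operator identities into \eqref{A_diagonalized.1}--\eqref{A_diagonalized.2} by the algebraic manipulations used in Proposition~\ref{eigenvector}. By condition~1) of Theorem~\ref{Theorem4} the joint spectrum coincides near $\tilde\tau_{10}$ with the smooth hypersurface $\{x^n+y^n+(-1)^{n-1}(e^{2\pi i/n}z_1+z_2)^n=1\}$, the $x_1$-axis is transversal to it, and by condition~2) the multiplicity is one, so the conditions listed at the start of Section~\ref{hypersurfaces} hold. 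Moreover, intersecting this hypersurface with $\{z_2=0\}$ (resp.\ $\{z_1=0\}$) gives exactly $\{x^n+y^n+(-1)^{n-1}z_1^n=1\}$ (resp.\ the same with $z_2$), and since slicing does not change $\dim\ker$ of the pencil, the tuples $(A,B,AB)$ and $(A,B,BA)$ each satisfy the hypotheses of Theorem~\ref{Theorem1} near the corresponding points; hence all the relations of Propositions~\ref{first-fact}, \ref{second-fact}, \ref{eigenvector} and Corollary~\ref{D} are at our disposal, as are their analogues with $BA$ in place of $AB$, together with $TA=I-P_0+T$, equivalently $B=TAB+P_0B-TB$ (see \eqref{A_and_T}, \eqref{B_through_A_and_T}).

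The new input is that here $\mathcal M=x_1^n+x_2^n+(-1)^{n-1}(e^{2\pi i/n}x_3+x_4)^n-1$ has degree $n$ and its leading form carries, besides $x_1^n$ and $x_2^n$, the monomials $(-1)^{n-1}\binom ns e^{2\pi is/n}x_3^{s}x_4^{n-s}$, $1\le s\le n-1$, which are absent in the Fourier-3 situation. Computing the coefficient $\psi_{r,1,0}(w)$ (the coefficient of $x_2^{r}x_3x_4^{0}$, where $x_2,x_3,x_4$ stand for $B,AB,BA$) from \eqref{residue_in_coefficients} and running the same reduction as in Proposition~\ref{first-fact} — monomials with $T$-powers above one and with an interior $P_0$ drop out — the vanishing of $Res_{w=1}\psi_{r,1,0}$ gives, since $\mathcal M$ has no monomial $x_2^{a}x_3$ with $a\le n-1$,
\[
P_0\Bigl(\sum_{i=0}^{r}(BT)^{i}(AB)(TB)^{r-i}\Bigr)P_0=0,\qquad 0\le r\le n-1,
\]
while the monomial $(-1)^{n-1}n\,e^{2\pi i/n}x_3x_4^{n-1}$ of $\mathcal M$, through $\psi_{0,1,n-1}$, yields
\[
P_0\Bigl(\sum_{i=0}^{n-1}(BAT)^{i}(AB)(TBA)^{n-1-i}\Bigr)P_0=e^{2\pi i/n}P_0,
\]
the normalization being exactly that of \eqref{moments.2} (one gets $\tfrac{(-1)^{n-1}(-1)^{n-1}n\,e^{2\pi i/n}}{n}=e^{2\pi i/n}$); the $x_3^{s}x_4^{n-s}$ monomials analogously produce $P_0$-sandwiched symmetric sums equal to $\tfrac1n\binom ns e^{2\pi is/n}P_0$.

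To obtain \eqref{A_diagonalized.1}--\eqref{A_diagonalized.2}, substitute $B=TAB+P_0B-TB$ into each interior $B$ of $P_0B^rABP_0$ and discard, by Corollary~\ref{D}, every term containing an interior block $P_0B(TAB-TB)^{j}P_0$ with $j<n$; exactly as in Proposition~\ref{eigenvector} this collapses $P_0B^rABP_0$ to $P_0B(TAB-TB)^{r-1}(AB)P_0$ for $r\le n-1$. Expanding $(TAB-TB)^{r-1}$ and regrouping as in \eqref{with_A.1}--\eqref{alternation} expresses this as a signed combination of symmetric sums of words of the form (a product in $TAB$, $TB$) followed by $AB$; converting every $AB$ that must become a $BA$ via $BA=A^{-1}(AB)A$, the conjugations by $A$ telescope along runs of equal factors and leave only isolated powers $A^{m}$, which by $TA=I-P_0+T$ and $P_0A=P_0$ collapse against $P_0$ into words already controlled by \eqref{moments.1}, \eqref{moments.2}, Proposition~\ref{eigenvector} and Corollary~\ref{D} (and, if needed, their rotated versions obtained by replacing $A$ with $e^{2\pi im/n}A$, under which both Fourier hypersurfaces are invariant) and the symmetric-sum identities of the previous paragraph. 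Carrying this out, the combination telescopes to $0$ for $r=1,\dots,n-2$ and to $e^{2\pi i/n}P_0$ for $r=n-1$.

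The step I expect to be the main obstacle is this final bookkeeping: matching the three families of symmetric sums — those coming from the expansion of $(TAB-TB)^{r-1}$, those coming from the residue formula \eqref{residue_in_coefficients}, and the $\mathcal C$- and $\mathcal D$-sums of Proposition~\ref{second-fact} and Corollary~\ref{D} — and keeping track of how the phase factors $e^{2\pi is/n}$ produced by the binomial expansion of $(e^{2\pi i/n}z_1+z_2)^n$, together with the stray powers of $A$, assemble so as to leave precisely $e^{2\pi i/n}$ at $r=n-1$ while cancelling for every smaller $r$. This is a more elaborate version of the passage \eqref{different_A}--\eqref{alternation} in the proof of Proposition~\ref{eigenvector}.
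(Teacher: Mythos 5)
Your setup and your first reduction are essentially the paper's: the local analysis of Section \ref{hypersurfaces} applies at $\tilde\tau_{10}$, the slices $z_2=0$ and $z_1=0$ make all relations of Section \ref{thm1} available for both $(A,B,AB)$ and $(A,B,BA)$, and the collapse of $P_0B^rABP_0$ to an alternating sum of symmetric words via $B=BAT+BP_0-BT$ and Corollary \ref{D} is exactly \eqref{through_BAT-BT}. The gap is in the last step, which you yourself flag as the obstacle and do not carry out. After the collapse one must evaluate symmetric sums of \emph{mixed} words containing $r$ copies of $BA$, $s$ copies of $B$ and one $AB$ (the operators $\mathcal{E}_1(r,s)$); the identities controlling these are the vanishing of the residues of the mixed monomials $y^sz_1z_2^r$ with $r\ge 1$ and $0<s+1+r\le n-1$, together with the one nonzero coefficient of $z_1z_2^{n-1}$. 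In the paper these become \eqref{r,s_term}--\eqref{n-1,0_term} and yield a recursion between the families $\mathcal{E}_1$ and $\mathcal{E}_2$ under which the sum \eqref{telescope} telescopes. Your toolkit omits precisely these: you invoke only the residues $\psi_{r,1,0}$ (no $BA$ at all) and the top-degree coefficients of $z_1^sz_2^{n-s}$ (no plain $B$ at all).

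The proposed substitute, rewriting $BA=A^{-1}(AB)A$ and telescoping conjugations, does not close this gap. First, $A$ is only assumed normal on $H$, so $A^{-1}$ need not exist. Second, in $P_0B(TAB-TB)^{r-1}ABP_0$ every $A$ already sits in an $AB$ block, so there is no $BA$ to convert in the quantity you must evaluate. Third, and decisively, the base case $r=1$ already fails with your listed ingredients: the paper obtains $P_0BABP_0=0$ from the residue of the mixed degree-two monomial $z_1z_2$, namely $P_0ABTBAP_0+P_0BATABP_0=0$, which via $P_0A=AP_0=P_0$ and \eqref{moments.1.1} gives $P_0BATABP_0=0$, and by \eqref{A_and_T} one has $P_0BATABP_0=P_0BABP_0$. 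Your identities yield only the strictly weaker $P_0BTABP_0=0$ and $P_0BATBP_0=0$, from which $P_0BABP_0$ does not follow. That mixed input is genuinely necessary is illustrated by Theorem \ref{Theorem3}: the pair \eqref{3a.1} satisfies every relation extractable from $\sigma_p(A,B,AB)$ and from $\sigma_p(A,B,BA)$ separately (both are Fourier surfaces for it), yet $P_0B^{n-1}ABP_0=e^{4\pi i/3}P_0\neq e^{2\pi i/3}P_0$, so \eqref{A_diagonalized.2} cannot be reached without the cross terms of the four-variable spectrum. To repair the argument, derive the relations \eqref{r,s_term}--\eqref{n-1,0_term} from \eqref{residue_in_coefficients} applied to $\psi_{s,1,r}$ and run the telescoping as in the paper.
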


\begin{proof}
The proof resembles those of Propositions \ref{first-fact} and \ref{second-fact}. 
We prove \eqref{A_diagonalized.1}  by induction in $r$. First, suppose that $r=1$. Since there are no monomials of positive degree less than $n$ in $$\{ x^n+y^n+(-1)^{n-1}(e^{2\pi i/n}z_1+z_2)^n=1\},$$
relation \eqref{residue_in_coefficients} applied for the residue of the term corresponding to $z_1z_2$ gives
$$P_0ABTBAP_0+P_0BATABP_0=0.$$
Since $P_0A=AP_0=P_0$, this relation can be written as
$$ P_0BTBP_0+P_0BATABP_0=0.$$
Now, \eqref{TAB} yields
$$P_0BTBP_0+P_0BA(B-P_0B+TB)P_0=0. $$
The first term $P_0BTBP_0$ vanishes by \eqref{moments.1}, so that
$$0=P_0BABP_0-P_0BA P_0BP_0+P_0BATBP_0.$$
The second term in the last equality vanishes by \eqref{moments.1} ($P_0BP_0=0$), and the last tem can be written
$$P_0BATBP_0=P_0(B-BP_0+BT)BP_0=P_0B^2P_0-P_0BP_0BP_0+P_0BTBP_0 .$$
$P_0B^2P_0=0$ by \eqref{Powers_of_B.1}, and $P_0BP_0BP_0=P_0BTBP_0=0$ again by \eqref{moments.1} Thus,
\begin{equation}\label{r=1}  P_0BABP_0=0,\end{equation}
and the result is established for $r=1$.

Suppose that the result holds for all $1\leq r\leq l \ (l<n-2)$, that is $P_0B^rABP_0=0$ for all $r\leq l$. Let us prove that $P_0B^{l+1}ABP_0=0$.

First, we claim that for $1\leq r\leq l$ the induction assumption implies the following relation 

\begin{equation}\label{through_BAT-BT}
0=P_0B^rABP_0=P_0(BAT-BT)^rABP_0. 
\end{equation}
Indeed, it follows from \eqref{A_and_T} that
$$I=AT+P_0-T, $$
and, therefore, for every $r$
\begin{align}
 &P_0B^rABP_0 \nonumber \\
 &=P_0\underbrace{B(AT+P_0-T)B(AT+P_0-T)...B(AT+P_0-T)}_{r}ABP_0. 
 \end{align}
If $r=1$, then we have by \eqref{r=1}
\begin{align*}
&0=P_0BABP_0=P_0(BAT+BP_0-BT)ABP_0 \\
&=P_0(BAT-BT)ABP_0+P_0BP_0ABP_0=P_0(BAT-BT)ABP_0. 
\end{align*}
For $r=2$ using $AP_0=P_0$ and \eqref{moments.1} we have 
\begin{align*}
&0=P_0B^2ABP_0=P_0(BAT+BP_0-BT)(BAT+BP_0-BT)ABP_0	 \\
&=P_0(BAT+BP_0-BT)(BAT-BT)ABP_0 \\ 
&+P_0(BAT+BP_0-BT)BP_0ABP_0 \\
&=P_0(BAT+BP_0-BT)(BAT-BT)ABP_0 \\
&+P_0(BAT+BP_0-BT)BP_0BP_0\\
&=P_0(BAT+BP_0-BT)(BAT-BT)ABP_0  \\ 
&=P_0(BAT-BT)^2ABP_0+P_0BP_0(BAT-BT)ABP_0 \\ 
&=P_0(BAT-BT)^2ABP_0+P_0B(P_0BABP_0)=P_0(BAT-BT)^2ABP_0.
\end{align*}
Similarly, we show
\begin{align*}P_0B^3ABP_0=P_0(BAT-BT)^3ABP_0+P_0B(P_0(BAT-BT)^2ABP_0 \\= P_0(BAT-BT)^3ABP_0+P_0B(P_0B^2ABP_0)  
=P_0(BAT-BT)^3ABP_0
\end{align*}

We continue this way and obtain \eqref{through_BAT-BT}. The above proof of the equality \eqref{through_BAT-BT} shows that
\begin{align}
&P_0B^{l+1}ABP_0=P_0(BAT+BP_0-BT)(BAT-BT)^lABP_0 \nonumber\\
&=P_0(BAT-BT)^{l+1}ABP_0+(P_0BP_0)(BAT-BT)^lABP_0 \nonumber \\
&=P_0(BAT-BT)^{l+1}ABP_0. \label{through_BAT-BT.1}
\end{align}

We now introduce the following operators. 
\begin{align*}
{\mathcal E}_1(r,s)=\left (\sum E_1...E_{r+s}\right )AB,	\  \mbox{where} \ E_j=\left \{ \begin{array}{c} BAT \\ BT \end{array}\right. ,
\end{align*}
where the sum is taken over all products $E_1...E_{r+s}$ which contain $r$ \ ($BAT$) terms,  $s$ - $(BT)$ terms, and $r+s\leq n-1$;
$${\mathcal E}_2(r,s)=\left (\sum E_1...E_{r+s}\right )B, \ \mbox{where} \ E_j=\left \{ \begin{array}{l} BAT \\BT\\ ABT \end{array} \right. ,$$
 and here the sum is taken over all  $E_1...E_{r+s}$ 	where $ABT$ occurs once and $BAT$ - $(r)$ times, a $BT$ - $(s)$ times, and $r+s\leq n-2$

The relations between these operators are derived from \eqref{residue1} and \eqref{residue_in_coefficients} the following way.

The polynomial $x^n+y^n+(-1)^{n-1}(e^{2\i i/n} z_1+z_2)^n-1$ does not contain any monomials $y^{m_1}z_1^{m_2}z_2^{m_3}$ with $0<m_1+m_2+m_3<n$. Therefore, in this case, if $r+s\leq n-2$, \eqref{residue1}, \eqref{residue_in_coefficients}, and  $AP_0=P_0$ applied to 
 $\psi_{s,1,r}$ and written in terms of operators ${\mathcal E}_j(r,s)$ yield
\begin{align}
&P_0\left ({\mathcal E}_1(r,s)+{\mathcal E}_2(r,s-1)+{\mathcal E}_2(r-1,s)\right )P_0=0 \ \mbox{if} \ r\neq 0,s\neq 0	\label{r,s_term}\\
&P_0 ( {\mathcal E}_1 (0,s)+{\mathcal E}_2(0,s-1))P_0=0 \label{0,s_term}\\
&P_0 ({\mathcal E}_1(r,0)+{\mathcal E}_2(r-1,0))P_0=0. \label{r,0_term}
\end{align}

The coefficient for $z_1z_2^{n-1}$ in the same polynomial is $(-1)^{n-1}ne^{2\pi i/n}$, so that \eqref{residue1} and \eqref{residue_in_coefficients} imply
\begin{equation}\label{n-1,0_term}
P_0({\mathcal E}_1(n-1,0)+{\mathcal E}_2(n-2,0))P_0=e^{2\pi i/n}P_0.	
\end{equation}

Equations \eqref{r,s_term} - \eqref{n-1,0_term} give

\begin{align}
&P_0{\mathcal E}_1(r,s)P_0=-P_0(	{\mathcal E}_2(r,s-1)+{\mathcal E}_2(r-1,s) )P_0, \ r\neq 0, s\neq 0 \label{r,s_term.1}\\
&P_0{\mathcal E}_1(0,s)P_0=-P_0{\mathcal E}_2(0,s-1)P_0 \label{0,s_term.1}\\
&P_0{\mathcal E}_1(r,0)P_0=-P_0{\mathcal E}_2(r-1,0)P_0 \label{r,0_term.1}\\
&P_0{\mathcal E}_1(n-1,0)P_0=e^{2\pi i/n}P_0-P_0{\mathcal E}_2(n-2,0)P_0.\label{n-1,0_term.1}
\end{align}

Further, \eqref{through_BAT-BT.1} can be written in the form
\begin{equation}\label{telescope} P_0B^{l+1}ABP_0=\sum_{s=0}^{l+1} (-1)^s P_0{\mathcal E}_1(l+1-s,s)P_0.\end{equation}
Now, for $l+1\leq n-2$ relations \eqref{r,s_term.1} - \eqref{r,0_term.1} show that the sum in the right hand side of \eqref{telescope} telescopes to zero, which finishes the proof of \eqref{A_diagonalized.1}.

The proof of \eqref{A_diagonalized.2} is very similar to the one of \eqref{A_diagonalized.1}. First, using \eqref{A_diagonalized.1} we use the same proof to show that
$$P_0B^{n-1}ABP_0=P_0(BAT-BT)^{n-1}ABP_0.$$
Also, since there are no monomials $y^sz_1z_2^r$ with $s>0$, relations \eqref{r,s_term} - \eqref{r,0_term} are valid for $s\geq 1$. We use these relations along with \eqref{n-1,0_term.1} in a similar way and obtain \eqref{A_diagonalized.2}. We are done.

\end{proof}

\vspace{.2cm}

\textbf{Remark} Of course, since $\parallel B\parallel =1$, \eqref{A_diagonalized.1} follows from \eqref{A_diagonalized.2}, but, as we saw above, that would not simplify the proof.

\vspace{.2cm}

We are now ready to prove Theorem \ref{Theorem4}.

\begin{proof}

Let $e_0,...,e_{n-1}$ be an orthonormal eigenbasis for the restriction of $A$ to $L$, the subspace of Theorem \ref{Theorem1}. By Theorem \ref{Theorem1} the restriction of $B$ to $L$ is unitary. Hence, for $m\neq l, \ m,l\leq n-1$
\begin{align*}
&\langle B^me_0, B^le_0\rangle=\langle B^{n-l}B^me_0,e_0\rangle=\langle B^{n-l+m}e_0,e_0\rangle  \\
&= \langle P_0B^{n-l+m}P_0e_0,e_0\rangle=0, 
\end{align*}
so that $e_0,Be_0,...,B^{n-1}e_0$ form an orthonormal basis of $L$.

Proposition \ref{A_invariant} implies that each $B^me_0$ is an eigenvector for $A$. To see this we remark that  Proposition \ref{A_invariant} yields
$$ \langle ABe_0, B^me_0\rangle =\langle P_0B^{n-m}ABP_0e_0,e_0\rangle =\left \{ \begin{array}{cc} 0 & m\neq 1 \\
  e^{2\pi i/n}	& m=1 . \end{array} \right.
$$
This shows that $ABe_0=e^{2\pi i/n}Be_0$, so that $Be_0$ is a $e^{2\pi i/n}$-eigenvector for the operator $A$, and, therefore, $Be_0$ is a 1-eigenvector of $(e^{2\pi i (n-1)/m}A)$. 
Since the joint spectrum \newline 
$$\sigma_p(e^{2\pi (n-1) i/n}A,B,e^{2\pi (n-1) i/n}AB,B(e^{2\pi (n-1) i/n}A))$$ 
is the same as  $\sigma_p(A,B,AB,BA)$, the above argument is applicable and shows that $B^2e_0=B(Be_0)$ is an eigenvector of $(e^{2\pi (n-1) i/n}A)$ with eigenvalue $e^{2\pi i/n}$. 
Therefore,
$$e^{2\pi i (n-1)/n}AB^2e_0=((e^{2\pi (n-1) i/n}A)B)(Be_0)=e^{2\pi i/n}B^2e_i, $$ 
so that
$$AB^2e_0=e^{4\pi i/n}B^2e_0. $$
We proceed inductively this way and show that 
$$AB^me_i=e^{2\pi m i/n} B^me_i, \ m=1,...,n-1,$$
so the restriction of the pair $(A,B)$ to $L$ in the basis $e_0,Be_0,...,B^{n-1}e_0$ is exactly what was declared in the statement of Theorem \ref{Theorem4}. The verification of the fact that the transition matrix is $\frac{1}{\sqrt{n}}\Lambda_1F_n\Lambda_2$ for some diagonal matrices $\Lambda_{1,2}$ with the entrees on the main diagonal having absolute value one, is straightforward. 
\end{proof}

\vspace{.2cm}

Corollary \ref{cor} follows directly from Theorem \ref{Theorem4}.

\section{Algorithms Supporting the Proof of Theorem 1.10}\label{last}

\subsection{$4\times 4$ Hadamard Matrices}\

Here is the program constructed to determine the coefficents of $z^2$ using all permutations of $S4$, and shows that $H$ must be a Fourier Matrix.

\begin{verbatim}
S4 = SymmetricGroup(4)
var('z')
t = var('t', domain=RR)
H = matrix(SR, 4, 4, [1,1,1,1,
                    1,cos(t)+i*sin(t),-1,-1*(cos(t)+i*sin(t)),
                    1,-1,1,-1,
                    1,-1*(cos(t)+i*sin(t)),-1,cos(t)+i*sin(t)])
Hstar = matrix(SR, 4, 4, [1,1,1,1,
                    1,cos(t)-i*sin(t),-1,-1*(cos(t)-i*sin(t)),
                    1,-1,1,-1,
                    1,-1*(cos(t)-i*sin(t)),-1,cos(t)-i*sin(t)])
A = matrix(SR,4,4,[1,0,0,0,
                 0,i,0,0,
                 0,0,i^2,0,
                 0,0,0,i^3])
C = []
for l in S4:
    for p in S4:
        PermH = p.matrix()*H*l.matrix()
        PermHStar = l.matrix().transpose()*Hstar*p.matrix().transpose()
        B = 1/4*PermHStar*A*PermH
        Pencil = z*A*B-I
        pencil = Pencil.determinant()
        pencil.collect(z)
        c = pencil.coefficient(z,2).real()
        if c not in C:
            C.append(c)
            

print(C)
\end{verbatim}

The following algorithm shows that nothing outside of the subgroup, $G_4$, gives us the algebraic hypersurface $\{x^4+y^4-z^4=1 \}$.

\begin{verbatim}
S4 = SymmetricGroup(4)
g1 = S4("(1,2,3,4)")
g2 = S4("(1,3)")
G4 = S4.subgroup([g1,g2])
var('x y z1')
A = matrix(SR,4,4,[1,0,0,0,
                 0,i,0,0,
                 0,0,i^2,0,
                 0,0,0,i^3])
F = matrix(SR, 4, 4, lambda j,k: i^(j*k))
Fstar = F.conjugate_transpose()
JointSpectrum = -x^4 - y^4 + z1^4 + 1
L = []
for l in S4:
    for p in S4:
        if l not in G4 and p not in G4:
            PermF = p.matrix()*F*l.matrix()
            PermFStar = l.matrix().transpose()*Fstar*p.matrix().transpose()
            B = 1/4*PermFStar*A*PermF
            Pencil = x*A+y*B+z1*A*B-I
            pencil = Pencil.determinant()
            if pencil == JointSpectrum:
                L.append([l,p])

print(L)
\end{verbatim}

\subsection{$5\times 5$ Fourier Matrices}
Here the following algorithm shows that this subgroup, $G_5$, has the only permutations that produce this joint spectrum. In order to make the coeffiecients easier to compute, we used the universal cyclotomic field.  

\begin{verbatim}
UCF = UniversalCyclotomicField()
g = UCF.gen(5)
S5 = SymmetricGroup(5)
h1 = S5("(1,2,3,4,5)")
h2 = S5("(1,2,4,3)")
G5 = S5.subgroup([h1,h2])
R = PolynomialRing(UCF,3,'x')
A = matrix(R, 5, 5, [1,0,0,0,0,
                      0,g,0,0,0,
                      0,0,g^2,0,0,
                      0,0,0,g^3,0,
                      0,0,0,0,g^4])
I = matrix(R, 5, 5, [1,0,0,0,0,
                      0,1,0,0,0,
                      0,0,1,0,0,
                      0,0,0,1,0,
                      0,0,0,0,1])
F = matrix(R,5,5,lambda i,j: g^(i*j))
Fstar = matrix(R,5,5, lambda i,j: g^(5-i*j))
x0,x1,x2 = R.gens()
JointSpectrum = x0^5 + x1^5 + x2^5 - 1
L = []
for p in S5:
    for l in S5:
        if p not in G5 and l not in G5:
            PermF = p.matrix()*F*l.matrix()
            PermFStar = l.matrix().transpose()*Fstar*p.matrix().transpose()
            B = 1/5*PermFStar*A*PermF
            Pencil = x0*A+x1*B+x2*A*B-I
            pencil = Pencil.determinant()
            if pencil == JointSpectrum:
                L.append([l,p])
                
print(L)
\end{verbatim}


\begin{thebibliography}{99}
\bibitem{B1} T.Banica, Hopf algebras and subfactors associated to vertex modules, \textit{J. Funct. Anal.}, \textbf{159} (1998), 243-266.
\bibitem{B} T.Banica, Complex Hadamard matrices with noncommutative entries, \textit{Ann. Funct. Anal.}, ]\textbf{9} (2018), no. 3, 354-368.
\bibitem{BN} T.Banica, R.Nicoara, Quantum groups and Hadamard matrices, arXiv:math/0610529v1[math.OA]
\bibitem{BeN} K.Beauchamp, R.Nicoara, Orthogonal maximal abelian *-subalgebras of the $6\times 6$ matrices, 	arXiv:math/0609076 [math.OA].
\bibitem{Bu} A.T.Butson, Generalized Hadamard matrices, \textit{Proc. Amer. Math. Soc}, \textbf{13} (1962), 894-898.
\bibitem{BCY} J.P. Bannon, P. Cade, R. Yang, On the spectrum of Banach algebra-valued entire
functions, \textit{Illinois J. Math.},  \textbf{55}(2011), 1455-1465.
\bibitem{CY} P. Cade, R. Yang, Projective spectrum and cyclic cohomology, \textit{J. Funct. Anal.}, \textbf{265}(2013), 1916-1933.
\bibitem{CSZ} I. Chagouel, M. Stessin, K.Zhu, Geometric spectral theory for compact operators, \textit{Trans. Amer. Math. Soc.}, \textbf{ 368} (2016), 1559-1582. 
\bibitem{CST}  Z. Cuckovic, M.Stessin, A. Tchernev, Deterninantal hypersurfaces and representations of Coxeter groups, arXiv:1810.12893 [math.RT].

\bibitem{D1}L.E.Dickson, An elementary exposition of Frobenius’ theory of group characters and group determinants, \textit{Ann. of Math.} \textbf{2}, no. 4 (1902), 25-49; also in Mathematical papers, Vol. II, Chelsea, New York, 1975, 737-761.
\bibitem{D2} L. E. Dickson, On the group defined for any given field by the multiplication table of any given finite group, \textit{Trans. Amer. Math. Soc.}, \textbf{3} (1902), 377-382; also in Mathematical Papers, Vol. II, Chelsea, New York, 1975, 75-91.
\bibitem{D3} L.E.Dickson, Modular theory of group-matrices, \textit{Trans. Amer. Math. Soc.}, \textbf{8} (1907), 389-398; also in Mathematical Papers, Vol. II, Chelsea, New York, 1975, 251-260.
\bibitem{D4} L.E.Dickson, Modular theory of group-characters, \textit{Bull. Amer. Math. Soc.}, \textbf{13} (1907), 477-488; also in Mathmatical Papers, Vol. IV, Chelsea, New York, 1975,  535-546.
\bibitem{D5} L.E.Dickson, Determination of all general homogeneous polynomials expressible as determinants with linear elements, \textit{Trans. Amer. Math. Soc.}, \textbf{2}2 (1921), 167-179.
\bibitem{D} I. Dolgachev, Classical Algebraic Geometry: A Modern View, Cambridge Univ. Press,
Cambridge 2012.
\bibitem{DY} R.G.Douglas and R. Yang, Hermitian geometry on reslovent set (I), \textit{ Proceedings of theInternational Workshop on Operator Theory} (IWOTA 2016).
\bibitem{EP} J. Eschmeier, M. Putinar, Spectral decompositions and analytic sheaves, London Mathematical Society Monographs. New Series, vol. 10, The Clarendon Press, Oxford University Press, New York, 1996.
\bibitem{GoY} B.Goldberg, R.Yang, Self-similiarity and spectral dynamics, \textit{arXiv}: 2002.09791 (Math FA).
\bibitem{GS}  P. Gonzalez-Vera, M. I. Stessin, Joint spectra of Toeplitz operators and optimal recovery of analytic functions, \textit{Constr. Approx.}, \textbf{36} (2012), no. 1, 53-82.
\bibitem{GY} R. Grigorchuk, R.Yang, Joint spectrum and infinite dihedral group, \textit{Proc. Steklov Inst.Math.}, \textbf{297} (2017), 145-178. 
\bibitem{H} U. Haagerup, Orthogonal maximal abelian $\ast$-subalgebras of the $n\times n$ matrices, in "Operator algebras and quantum field theory", International Press, 296-323.
\bibitem{J} V.F.R. Jones, Planar algebras I, arXiv:math/9909027 [math.QA]
\bibitem{KV} D. Kerner, V. Vinnikov, Determinantal representations of singular hypersurfaces in ${\mathbb P}^n$, \textit{Adv. Math.}, \textbf{231} (2012), 1619-1654.
\bibitem{KV} I.Klep, J.Vol\v{c}i\v{c}, A  note on group representations, determinantal hypersurfaces and their quantizations, Proceedings IWOTA 2019,  to appear.
\bibitem{La} P.Lax, Functional analysis, John Wiley $\&$ Sons, 2002.
\bibitem{MQW} T.Mao, Y.Qiao, P.Wang, Commutativity of Normal Compact Operator via Projective
Spectrum, \textit{Proc. AMS},  \textbf{146} (2017), no. 3, 1165-1172.
\bibitem{MP} A.McIntosh , A. Pryde, A functional calculus for several commuting operators, \textit{Indiana Univ. Math. J.}, \textbf{36} (1987), no. 2, 421-439. 
\bibitem{Pe} M.Petrescu, Existence of continuous families of complex Hadamard matrices of certain prime dimensions and related results, Ph.D. Thesis, UCLA (1997).
\bibitem{Po} S.Popa, Orthogonal pairs of $*$-subalgebrasin finite von Neumann algebras, \textit{J.Oper. Theory}, \textbf{9} (1983), 253-268.
\bibitem{P1} A.J.Pride,  A noncommutative joint spectral theory, Workshop/Miniconference on Functional Analysis and Optimization (Canberra, 1988)	\textit{Proc. Centre Math. Anal. Austral. Nat. Univ.}, \textbf{20} Austral. Nat. Univ., Canberra, (1988)  153 - 161. 
\bibitem{P2} A.J.Pryde, Inequalities for exponentials in Banach algebras, \textit{Studia Math.} \textbf{100} (1991), no. 1, 87-94. 
\bibitem{S} J.J.Sylvester, Thoughts onn inverse orthogonal matrices, simultaneous sign-successions, and tesselated pavements in two or more colors, with applications to Newton's rule, ornamental tile-work, and the theory of numbers, \textit{Phil. Mag.}, \textbf{34} (1867), 461 - 475.
\bibitem{ST} M.I.Stessin, A.B.Tchernev, Geometry of joint spectra and decomposable operator tuples, {\it J. Oper. Theory},	{\bf 82:1} (2019), pp. 79-113.
\bibitem{SYZ}  M. Stessin, R. Yang, K. Zhu, Analyticity of a joint spectrum and a multivariable analytic Fredhom theorem, \textit{New York J. Math.}, \textbf{17A} (2011), 39-44.
\bibitem{TZ} W.Tadej, K.Zyczkowski, A concise guide to complex Hadamard matrices, \textit{Open Syst. Inf. Dyn.}, \textbf{13} (2006), 133-177.
\bibitem{Ta} T.Tao, Fuglede's conjecture is false in 5 and higher dimensions, \textit{Mat. Res. Lett}, \textbf{11} (2004), 251 - 258.
\bibitem{T} J.L.Taylor, A joint spectrum for several commuting operators, \textit{J. Functional Analysis}, \textbf{6} (1970), 172-191.
\bibitem{V} V. Vinnikov, Complete description of determinantal representations of smooth irreducible curves, \textit{Lin. Albebra and Appl.}, \textbf{125} (1989), 103-140.
\bibitem{Y} R. Yang, Projective spectrum in Banach algebras, \textit{J. Topol. Anal.}, \textbf{ 1 }(2009), no. 3, 289-306
\end{thebibliography}
\end{document}